\newcommand{\E}{\mathbb E}
\newcommand{\R}{\mathbb R}
\newcommand{\tr}{\mathrm{tr}}
\newcommand{\ds}{\displaystyle}
\newcommand{\manifold}[1]{\mathcal{#1}}
\newcommand{\M}{\manifold{M}}
\renewcommand{\span}{\mathrm{span}}
\newcommand{\const}{\mathrm{const}}
\newtheorem{thm}{Theorem}[section]
\newtheorem{cor}[thm]{Corollary}
\newtheorem{prop}[thm]{Proposition}
\theoremstyle{definition}
\theoremstyle{remark}
\numberwithin{equation}{section}
\begin{document}

\title[Timelike Meridian Surfaces of elliptic type in the Minkowski 4-Space]{Timelike Meridian Surfaces of elliptic type in the Minkowski 4-Space}

\author{Victoria Bencheva, Velichka Milousheva}

\address{Institute of Mathematics and Informatics, Bulgarian Academy of Sciences,
Acad. G. Bonchev Str. bl. 8, 1113, Sofia, Bulgaria}
\email{viktoriq.bencheva@gmail.com}
\email{vmil@math.bas.bg}

\subjclass[2020]{Primary 53B30, Secondary 53A35, 53B25}
\keywords{Meridian surfaces, surfaces with constant Gauss curvature, CMC-surfaces, surfaces with parallel normalized mean curvature vector field, canonical isotropic parameters}

\begin{abstract}

We consider a special family of 2-dimensional timelike surfaces in the  Minkowski 4-space $\R^4_1$ which lie on rotational hypersurfaces with timelike axis and 
call them meridian surfaces of elliptic type. 
We study the following basic classes of timelike meridian surfaces of elliptic type: with constant Gauss curvature, with constant mean curvature, with parallel  mean curvature vector field, with parallel normalized mean curvature vector field. The  results obtained for the last class are used to give explicit solutions to the background systems of natural PDEs describing the timelike surfaces with parallel normalized mean curvature vector field in $\R^4_1$.

\end{abstract}

\maketitle

\section{Introduction}

In the local theory of surfaces both in Euclidean and pseudo-Euclidean spaces, one of the basic problems is to find a minimal number of functions, satisfying some natural conditions, that determine the surface up to a motion. This problem is known as the Lund-Regge problem \cite{Lund-Reg}. It is solved for zero mean curvature surfaces of co-dimension two in the Euclidean 4-space $\R^4$ in  
 \cite{Itoh,Trib-Guad}, for spacelike and timelike zero mean curvature surfaces in the Minkoswki space $\R^4_1$ in \cite{Al-Pal} and \cite{G-M-IJM}, respectively,  for spacelike and timelike zero mean curvature surfaces in the pseudo-Euclidean space with neutral metric $\R^4_2$ in \cite{Sa} and \cite{A-M-1}, respectively. The spacelike or timelike surfaces with zero mean curvature in these spaces admit (at least locally) special isothermal parameters, called \textit{canonical},  such that the two main invariants -- the Gaussian curvature and the normal curvature of the surface satisfy a system of two partial differential equations called a \textit{system of natural PDEs}. So, the number of the invariant functions determining the surfaces and the number of the differential equations are reduced to two. Moreover, the geometry of the corresponding zero mean curvature surface  is determined  by the solutions of this system of natural PDEs. 

Thus, another natural question  arises: can we solve the Lund-Regge problem for other classes of surfaces,  different from the minimal ones, in 4-dimensional spaces? 
Or, equivalently, can we introduce canonical parameters and obtain natural equations for other classes of surfaces,  different from the minimal ones, in 4-dimensional spaces? 
We managed to  solve this problem for the surfaces with parallel normalized mean curvature vector field -- another important class of surfaces both in Riemannian and pseudo-Riemannian geometry, since being a natural extension of the surfaces with parallel mean curvature vector field, they play an important role in Differential geometry and  Physics.
We proved that the surfaces with parallel normalized mean curvature vector field (PNMCVF) can be described  in terms of three  functions satisfying a system of three partial differential equations. 
The results for PNMCVF-surfaces in the Euclidean 4-space $\R^4$ and spacelike PNMCVF-surfaces in the Minkowski 4-space $\R^4_1$ are obtained in \cite{G-M-Fil}. A similar result is proved for one class of Lorentz PNMCVF-surfaces in the pseudo-Euclidean 4-space $\R^4_2$ (see \cite{AGM}), but for the rest of them the question is still open.
In \cite{B-M} we obtained the background systems of natural partial differential equations describing
the timelike surfaces with parallel normalized mean curvature vector field in the Minkowski space $\R^4_1$. Our approach to the study of PNMCVF-surfaces both in Euclidean and pseudo-Euclidean spaces  is based on the introducing of special geometric parameters on each  such surface which we call \textit{canonical parameters}. In the case of timelike PNMCVF-surfaces we use canonical isotropic parameters. 
 Examples of solutions to the background systems of PDEs can be found in the class of the so called meridian surfaces -- these are 2-dimensional surfaces lying on
rotational hypersurfaces in $\R^4$, $\R^4_1$ or $\R^4_2$.

The search for explicit solutions to the system of PDEs that describes timelike surfaces with parallel normalized mean curvature vector field in $\R^4_1$ motivated us to consider timelike meridian  surfaces in the Minkowski 4-space.

\vskip 1mm
In the present paper, we consider a special family of 2-dimensional timelike surfaces in the four-dimensional Minkowski space $\mathbb R^4_1$ which lie on rotational hypersurfaces with timelike axis.
We call them \textit{meridian surfaces of elliptic type}. 
We study some basic classes of timelike meridian surfaces of elliptic type: with constant Gauss curvature, with constant mean curvature, with parallel  mean curvature vector field, with parallel normalized mean curvature vector field. 
The class of timelike meridian surfaces of elliptic type  with parallel normalized mean curvature vector field is described in Theorem \ref{Th:meridSurfParallelNorm}. We use this result to give explicit solutions to the background systems of natural PDEs describing the timelike surfaces with parallel normalized mean curvature vector field in $\R^4_1$.
We introduce isotropic canonical parameters on any timelike meridian surface of elliptic type and obtain the geometric  isotropic frame field for this class of surfaces and the geometric functions introduced in   \cite{B-M}.
Finally, we give examples of explicit solutions to the background systems of natural PDEs describing the PNMCVF-surfaces in $\R^4_1$.

\section{Preliminaries}

We consider the four-dimensional Minkowski space  $\mathbb R^4_1$   endowed with the standard flat metric 
$\langle ., . \rangle$ of signature $(3,1)$ which is given in local coordinates by
$dx_1^2 + dx_2^2 + dx_3^2 -dx_4^2,$
where $\left( x_{1},x_{2},x_{3},x_{4}\right) $ is a rectangular coordinate
system of $\mathbb{R}^4_1$.  Since the metric is indefinite, 
 a vector $v \in \R^4_1$ can have one of the three casual characters: it can be \textit{spacelike} if $\langle v, v \rangle >0$
or $v=0$, \textit{timelike} if $\langle v, v \rangle<0$, and \textit{lightlike} if $\langle v, v \rangle =0$ and $v\neq 0$. The terminology is inspired by the General Relativity.

We use the following denotations:
\begin{equation*}
\begin{array}{l}
\vspace{2mm}
\mathbb{S}^3_1(1) =\left\{V\in \R^4_1: \langle V, V \rangle =1 \right\}; \\
\vspace{2mm}
\mathbb{H}^3_1(-1) =\left\{ V\in \R^4_1: \langle V, V \rangle = -1\right\}.
\end{array}
\end{equation*}
The space $\mathbb{S}^3_1(1)$ is known as the de Sitter space, and the
space $\mathbb{H}^3_1(-1)$ is known as  the hyperbolic space (or the anti-de Sitter space) \cite{O'N}.

Recall that a surface  $\M^2$ in $\mathbb R^4_1$ is said to be
\emph{spacelike} (resp. \emph{timelike}), if $\langle ., . \rangle$ induces  a Riemannian (resp. Lorentzian) 
metric $g$ on $\M^2$. In the present paper we study timelike surfaces, so at each point $p\in \M^2$ we have the following decomposition
$$\R^4_1 = T_p \M^2 \oplus N_p \M^2$$
with the property that the restriction of the metric
onto the tangent space $T_p \M^2$ is of
signature $(1,1)$, and the restriction of the metric onto the normal space $N_p \M^2$ is of signature $(1,1)$.

The Levi Civita connections on $\mathbb R^4_1$ and $\M^2$ will be denoted by $\widetilde{\nabla}$ and $\nabla$, respectively.
Thus, for any tangent vector fields $x$ and $y$ and any normal vector field $\xi$  we can write the following  Gauss and Weingarten formulas:
$$\begin{array}{l}
\vspace{2mm}
\widetilde{\nabla}_xy = \nabla_xy + \sigma(x,y);\\
\vspace{2mm}
\widetilde{\nabla}_x \xi = - A_{\xi} x + D_x \xi,
\end{array}$$
where $\sigma$ is the second fundamental tensor, $D$ is the normal connection, 
and  $A_{\xi}$ is the shape operator with respect to $\xi$. In general, $A_{\xi}$ is not diagonalizable.

The mean curvature vector  field $H$ of $\M^2$ is defined as
$$H = \ds{\frac{1}{2}\,  \tr\, \sigma}.$$ 
A surface is called \textit{minimal} if its
mean curvature vector vanishes identically, i.e.  $H=0$. A normal vector field $\xi$ on a surface $\M^2$ is called \emph{parallel in the normal bundle} (or simply \emph{parallel}) if $D{\xi}=0$ 
\cite{Chen}.
A surface $\M^2$ is said to have \emph{parallel mean curvature vector field} if its mean curvature vector $H$ is parallel, i.e.
$D H =0$.

Surfaces for which the mean curvature vector field $H$ is non-zero, i.e. $\langle
H, H \rangle \neq 0$, and  the unit vector field in the
direction of $H$  is parallel in the normal bundle, are called surfaces with \textit{parallel normalized mean
curvature vector field} \cite{Chen-MM}. It can easily be seen that if $M$ is a
surface with non-zero parallel mean curvature vector field $H$ (i.e. $DH = 0$%
), then $M$ is a surface with parallel normalized mean curvature vector
field, but the converse is not true in general. It is true only in the case $%
\Vert H \Vert = const$. In  \cite{Chen-MM}, it is proved that every analytic surface with parallel normalized mean curvature
vector  in the Euclidean $m$-space $\mathbb{R}^{m}$ must either lie in a
4-dimensional space $\mathbb{R}^{4}$ or in a hypersphere of $\mathbb{R}^{m}$
as a minimal surface.

Complete classification of biconservative surfaces with parallel normalized mean curvature vector field in $\R^4$ is given in \cite{Sen-Turg-JMAA} and biconservative $m$-dimensional submanifolds with parallel normalized mean curvature vector field in $\R^{n+2} $ are studied in \cite{Sen}. Spacelike submanifolds with parallel normalized mean curvature vector field in a general de Sitter space are studied in \cite{Shu}.
For timelike surfaces in the Minkowski 4-space $\E^4_1$ or the pseudo-Euclidean 4-space with neutral metric $\E^4_2$ there are very few results on surfaces with  parallel normalized mean
curvature vector field, but non-parallel mean curvature vector field.

\vskip 1mm
In \cite{B-M-2}, we studied timelike surfaces free of minimal points in the Minkowski 4-space $\R^4_1$ and for each such surface we introduced a
pseudo-orthonormal frame field $\{x,y,n_1,n_2\}$, which is geometrically determined by the two lightlike directions  in the tangent space of the surface and the  mean curvature vector field. This  pseudo-orthonormal frame field  is called a \textit{geometric frame field} of the surface. Writing the derivative formulas with respect to the geometric frame field and using the integrability conditions, we obtained a system of six functions satisfying some differential equations and  proved a Fundamental Bonnet-type theorem stating that, in the general case, these six functions determine the surface up to a motion in $\R^4_1$. 
Bellow, we present briefly the construction.

\vskip 1mm
 In \cite{Lar}, it is shown that for each timelike surface $\M^2$ in $\R^4_1$ locally there exists a coordinate system  $(u,v)$ such that  the metric tensor $g$ of $\M^2$ has the following form:
\begin{equation*} \label{E:Eq-g}
g= - f^2(u, v)(du\otimes dv + dv\otimes du),
\end{equation*}
$f(u, v)$ being a  positive function. We suppose that $z=z(u, v), (u, v) \in \mathcal{D}$, $\mathcal{D} \subset \mathbb R^2$, is such a local parametrization on $\M^2$. Let us denote 
$z_u=\frac{\partial z}{\partial u},  \; 	z_v=\frac{\partial z}{\partial v}$.
Then, the coefficients of the first fundamental form are
\begin{equation*}
E = \langle z_u, z_u \rangle = 0; \quad F = \langle z_u, z_v \rangle = - f^2(u, v); \quad G = \langle z_v, z_v \rangle = 0.
\end{equation*}
Since both $z_u$ and $z_v$ are lightlike (isotropic), i.e.  $\langle z_u, z_u \rangle = 0$ and $\langle z_v, z_v \rangle = 0$, the parameters $(u,v)$ are called \textit{isotropic parameters} of the surface. 

We consider the following pseudo-orthonormal tangent frame field  of $\M^2$: $x=\displaystyle{\frac{z_u}{f}}$, $y=\displaystyle{\frac{z_v}{f}}$. Obviously,
 $\langle x, x \rangle = 0$,  $\langle x, y \rangle = -1$, $\langle y, y \rangle = 0$, and hence, the mean curvature vector field $H$ is expressed as: 
$$H = - \sigma(x, y).$$

For surfaces free of minimal points, i.e.  $H \neq 0$ at all points, we can choose a unit normal vector field $n_1$ such that $H = \nu n_1$  for a smooth function $\nu = || H ||$ and a unit normal vector field $n_2$ such that $\{n_1, n_2\}$ is an orthonormal frame field of the normal bundle ($n_2$ is determined up to orientation). Then, for the second fundamental tensor $\sigma$ we have the following formulas :
\begin{equation*}
\begin{array}{l} 
\vspace{2mm}
\sigma (x,x) = \lambda_1 n_1 + \mu_1 n_2; \\
\vspace{2mm}
\sigma (x,y) = -\nu n_1; \\
\vspace{2mm}
\sigma (y,y) = \lambda_2 n_1 + \mu_2 n_2,
\end{array}
\end{equation*} 
where $\lambda_1, \mu_1, \lambda_2, \mu_2$ are  smooth functions defined by:
$$
\lambda_1 = \langle \widetilde{\nabla}_x x, n_1 \rangle; \quad  \mu_1 = \langle \widetilde{\nabla}_x x, n_2 \rangle; \quad 
\lambda_2 = \langle \widetilde{\nabla}_y y, n_1 \rangle; \quad  \mu_2 = \langle \widetilde{\nabla}_y y, n_2 \rangle.  
$$
Keeping  in mind that $x=\ds{\frac{z_u}{f}}$, $y=\ds{\frac{z_v}{f}}$ and denoting $\gamma_1 = \frac{f_u}{f^2} = x(\ln f)$, $\gamma_2 = \frac{f_v}{f^2} = y(\ln f)$, we obtain the following derivative formulas:
\begin{equation}\label{E:DerivFormIsotr}
\begin{array}{ll}
\vspace{2mm}
\widetilde{\nabla} _x x = \gamma_1 x \qquad\quad + \lambda_1 n_1 + \mu_1 n_2;  & \qquad \widetilde{\nabla}_x n_1 = -\nu x  + \lambda_1 y \quad\quad + \beta_1 n_2; \\
\vspace{2mm}
\widetilde{\nabla}_x y = \quad\quad -\gamma_1 y -\nu n_1; & \qquad \widetilde{\nabla}_y n_1 = \lambda_2 x -\nu y \quad\quad\,\,\,\,\, + \beta_2 n_2; \\
\vspace{2mm}
\widetilde{\nabla}_y x = -\gamma_2 x \quad\quad -\nu n_1; & \qquad  \widetilde{\nabla}_x n_2 = \quad\quad + \mu_1 y  -\beta_1 n_1; \\
\vspace{2mm}
\widetilde{\nabla}_y y = \quad\quad\,\,\, \gamma_2 y \, \, +  \lambda_2 n_1 + \mu_2 n_2; & \qquad \widetilde{\nabla}_y n_2 = \mu_2 x \quad\quad\,\,\,\, -  \beta_2 n_1,
\end{array}
\end{equation}
where $\beta_1 = \langle \widetilde{\nabla}_x n_1 , n_2 \rangle$ and  $\beta_2 = \langle \widetilde{\nabla}_y n_1 , n_2 \rangle$. 

Formulas \eqref{E:DerivFormIsotr} are the derivative formulas of the surface with respect to the geometric frame field $\{x,y,n_1,n_2\}$. 
The functions $\gamma_1$, $\gamma_2$, $\nu$, $\lambda_1$, $\mu_1$, $\lambda_2$, $\mu_2$, $\beta_1$, $\beta_2$ are called geometric functions of the surface since they are determined by the geometric frame field.
In the general case  $\mu_1 \mu_2 \neq 0$, the  functions $f$, $\nu$, $\lambda_1$, $\mu_1$, $\lambda_2$, $\mu_2$ determine the surface up to a motion in $\R^4_1$ (\cite{B-M-2}, Theorem 3.2). 

The geometric meaning of the functions $\beta_1$ and  $\beta_2$ is connected with the class of surfaces with parallel mean curvature vector field. More precisely, in \cite{B-M}, we proved that: 

\begin{itemize}
\item
$\M^2$ has parallel mean curvature vector field if and only if $\beta_1 = \beta _2 =0$ and $\nu =const$.
\item
 $\M^2$ has parallel normalized mean curvature vector field if and only if $\beta_1 = \beta _2 =0$ and $\nu \neq const$.
\end{itemize}

In \cite{B-M} we showed that by introducing special  so-called canonical isotropic parameters on each surface with parallel normalized mean curvature vector field, we reduce up to three the number of functions determining this class of surfaces. To be precise, we proved that the timelike surfaces with parallel normalized mean curvature vector field in $\R^4_1$ for which $K - H^2 > 0$ (resp. $K - H^2 < 0$)  are determined up to a rigid motion in $\R^4_1$ by three functions 
  $\lambda(u,v)$, $\mu(u,v)$, and $\nu(u,v)$, $\mu  \neq 0$, $\nu \neq const$, satisfying the following  system of partial differential equations: 
\begin{equation} \label{E:Eq-fund-1}
\begin{array}{l}
\vspace{2mm}
\nu_u + \lambda_v = \lambda (\ln|\mu|)_v;\\
\vspace{2mm}
\lambda_u - \varepsilon \nu_v = \lambda (\ln|\mu|)_u;\\
\vspace{2mm}
|\mu| \left(\ln |\mu|\right)_{uv} = - \nu^2 - \varepsilon (\lambda^2 + \mu^2),
\end{array} 
\end{equation}
where  $\varepsilon =  1$ in the case $K - H^2 >0$ (resp. $\varepsilon = - 1$ in the case $K - H^2 <0$).

 The surfaces characterized by $K - H^2 =0$ are determined up to a rigid motion by three functions $\lambda(u,v)$, $\mu(u,v)$, and $\nu(u)$ 
satisfying \cite{B-M}:
\begin{equation}  \label{E:Eq0-5}
\begin{array}{l}
\vspace{2mm}
\nu_u + \lambda_v = \lambda (\ln|\mu|)_v;\\
\vspace{2mm}
|\mu| (\ln |\mu|)_{uv} = -\nu^2.
\end{array} 
\end{equation}

The above systems \eqref{E:Eq-fund-1} and \eqref{E:Eq0-5} are the background systems of natural partial differential equations describing
the timelike surfaces with parallel normalized mean curvature vector field in $\R^4_1$. Examples of solutions to these background systems of PDEs can be found considering the class of the so-called meridian surfaces in the Minkowski 4-space.

In the next section, we give the construction of timelike meridian surfaces of elliptic type in $\R^4_1$.

\section{Timelike meridian surfaces of elliptic type}

In the Minkowski 4-space $\R^4_1$ there are three types of rotational hypersurfaces: rotational hypersurfaces with timelike, spacelike, or
lightlike axis. Depending on the type of the spheres in $\R^3_1 \subset \R^4_1$ and $\R^3 \subset \R^4_1$ and the casual character of the spherical curves, we distinguish different types of timelike and spacelike meridian surfaces in $\R^4_1$. In this paper we will construct timelike meridian surfaces of elliptic type which are one-parameter systems of meridians of a rotational hypersurface with timelike
axis. Timelike meridian surfaces of hyperbolic and parabolic type will be studied separately.

Let $Oe_1 e_2 e_3 e_4$ be the standard orthonormal frame  in $\R^4_1$, i.e. $\langle e_1,e_1 \rangle =\langle e_2,e_2 \rangle = \langle e_3,e_3 \rangle = 1$, $\langle e_4,e_4 \rangle = -1$. We consider a
rotational hypersurface with timelike axis $Oe_4$, which is obtained as follows. 
Consider the  sphere  $\mathbb{S}^2(1) =\left\{V\in \R^3: \langle V, V \rangle = 1\right \}$ in the  Euclidean  3-space $\R^3 = \span\left \{ e_{1},e_{2},e_{3}\right \}$ centered at the origin. It is parametrized by
$$l(w^1,w^2) = \cos w^1 \cos w^2 \,e_1 + \cos w^1 \sin w^2 \,e_2 + \sin w^1 \,e_3.$$
Let $f = f(u), \,\, g = g(u)$ be smooth functions, defined in an
interval $I \subset \R$, such that $\dot{f}^2(u) - \dot{g}^2(u) \neq 0, \, f(u)>0,
\,\, u \in I$. 
 The rotational hypersurface $\mathcal{M}^3$ in $\R^4_1$, obtained by
the rotation of the meridian curve $m: u \rightarrow (f(u), g(u))$
about the $Oe_4$-axis,  has the following parametrization:
$$\mathcal{M}^3: Z(u,w^1,w^2) = f(u)\left( \cos w^1 \cos w^2 e_1 +  \cos w^1 \sin w^2 e_2 +  \sin w^1 e_3\right) + g(u) e_4.$$
Let $w^1 = w^1(v)$, $w^2=w^2(v), \,\, v \in J, \, J \subset \R$. We consider the two-dimensional
 surface $\mathcal{M}_m$ lying on $\mathcal{M}^3$, which is parametrized as follows:
\begin{equation}  \notag
\mathcal{M}_m: z(u,v) = Z(u,w^1(v),w^2(v)), \quad u \in I, \, v \in J.
\end{equation}
Obviously, $\mathcal{M}_m$ is a one-parameter system of meridians of the rotational hypersurface $\mathcal{M}^3$. For this reason, we call
$\mathcal{M}_m$ a  \emph{meridian surface of elliptic type}.

Note that $\textbf{c}: l = l(v) = l(w^1(v),w^2(v))$ is a smooth curve lying on the two dimensional sphere $S^2(1)$. The parametrization of the meridian surface $\mathcal{M}_m$ can be written as follows:
\begin{equation} \label{Eq:meridian_surf}
\mathcal{M}_m: z(u,v) = f(u) \, l(v) + g(u)\, e_4, \quad u \in I, \, v \in J.
\end{equation}

Spacelike meridian surfaces of elliptic type in the Minkowski 4-space were defined in  \cite{GM6} where a local classification of marginally trapped meridian surfaces was given. 
Spacelike meridian surfaces in $\R^4_1$ with pointwise 1-type Gauss map were classified in \cite{AM}. The classification of  spacelike meridian surfaces of elliptic  or hyperbolic type with constant Gauss curvature or with constant mean curvature was given in \cite{GM-MC}.

In this paper we will study the timelike case of meridian surfaces of elliptic type.

For convenience, when differentiating with respect to the parameter $u$ we use the notation $\dot{f}(u) = \ds \frac{\partial f}{\partial u}$ and when differentiating with respect to the parameter $v$ we use $l'(v) = \ds \frac{\partial l}{\partial v}$.

Since we are interested in timelike surfaces, we consider the case $\dot{f}^2(u) - \dot{g}^2(u) <0, \,\, u \in I$. Without loss of generality we may assume that $\dot{f}(u)^2-\dot{g}(u)^2 = -1$. Hence, it follows that  $\dot{f}\ddot{f}-\dot{g}\ddot{g} = 0$. The curvature $\varkappa_m$ of the meridian curve $m$ is given by
$\varkappa_m = \dot{f}\ddot{g}-\dot{g}\ddot{f}$.

Without loss of generality we assume that  the curve $\textbf{c}: l = l(v)$ lying on the sphere $S^2(1)$ is parametrized by the arc-length, i.e. for 
$l(v)$ we have: 
$$
\langle l(v), l(v) \rangle = 1, \quad \langle l'(v), l'(v) \rangle = 1.
$$
The Frenet formulas for the curve $\textbf{c}$ on $S^2(1)$ are given bellow:
$$
\begin{array}{l}
\vspace{2mm}
l' = t; \\
\vspace{2mm}
t'=  \varkappa n - l; \\
\vspace{2mm}
n'= - \varkappa t,
\end{array}
$$
where $\varkappa = \varkappa(v)$ is the spherical curvature of $\textbf{c}$, i.e. $\varkappa (v)= \langle t'(v), n(v) \rangle$, and $\{l(v), t(v), n(v)\}$ is an orthonormal frame field in $\R^3 = \span \{e_1, e_2, e_3\}$.

Then, the tangent vector fields $z_u$ and $z_v$ of $\M_m $ are expressed as follows:
$$
\begin{array}{l}
\vspace{2mm}
z_u = \dot{f}(u) l(v) + \dot{g}(u) e_4;\\
\vspace{2mm}
z_v =  f(u) t(v).
\end{array}
$$
Calculating the coefficients of first fundamental form of $\M_m $ we get:
$$
\begin{array}{l}
\vspace{2mm}
E = \langle z_u, z_u \rangle = \dot{f}(u)^2-\dot{g}(u)^2 = -1;\\
\vspace{2mm}
F = \langle z_u, z_v \rangle = 0;\\
\vspace{2mm}
G = \langle z_v, z_v \rangle = f^2(u).
\end{array}
$$
Hence,  $EG-F^2 = -f^2(u)$. 
We consider the following orthonormal tangent frame field:
\begin{equation} \label{E:Eq-mer-tangent}
\begin{array}{l}
\vspace{2mm}
X = z_u;  \\
\vspace{2mm}
Y = \ds \frac{z_v}{f} = t.
\end{array}
\end{equation}
Therefore, $\langle X,X \rangle = -1, \,\,  \langle X,Y \rangle = 0, \,\,  \langle Y,Y \rangle = 1$. 
We consider the following orthonormal frame field of the  normal space:
\begin{equation} \label{E:Eq-mer-normal}
\begin{array}{l}
\vspace{2mm}
N_1 = n(v); \\
\vspace{2mm}
N_2 =  \dot{g}(u) l(v) + \dot{f}(u) e_4.
\end{array}
\end{equation}
Obviously, 
$\langle N_1,N_1 \rangle = 1, \,\,  \langle N_1,N_2 \rangle = 0, \,\,  \langle N_2,N_2 \rangle = 1$. 
Calculating the second derivatives of the vector function $z(u,v)$ we obtain:
\begin{equation} \label{E:Eq-mer-second-deriv}
\begin{array}{l}
\vspace{2mm}
z_{uu} = \ddot{f} l + \ddot{g} e_4; \\
\vspace{2mm}
z_{uv} = \dot{f} l'(v) = \dot{f} t; \\
\vspace{2mm}
z_{vv} = f t' = f \varkappa n - f l, 
\end{array}
\end{equation}
which imply that 
$$
\begin{array}{ll}
\vspace{2mm}
\langle z_{uu}, N_1 \rangle = 0; & \qquad  \langle z_{uu}, N_2 \rangle = -\varkappa_m;\\
\vspace{2mm}
 \langle z_{uv}, N_1 \rangle = 0; & \qquad  \langle z_{uv}, N_2 \rangle = 0;\\
\vspace{2mm}
\langle z_{vv}, N_1 \rangle = f \varkappa; & \qquad  \langle z_{vv}, N_2 \rangle = - f \dot{g}.
\end{array}
$$

Using that  $X = z_u$, $Y = \ds \frac{z_v}{f}$ and formulas \eqref{E:Eq-mer-second-deriv}, we  obtain the following  derivative formulas for the frame field $\{X, Y, N_1, N_2\}$:
$$
\begin{array}{l}
\vspace{2mm}
\widetilde{\nabla}_X X = \ddot{f} l + \ddot{g} e_4;\\
\vspace{2mm}
\widetilde{\nabla}_X Y = 0;\\
\vspace{2mm}
\widetilde{\nabla}_Y X = \ds \frac{\dot{f}}{f} t;\\
\vspace{2mm}
\widetilde{\nabla}_Y Y = \ds \frac{\varkappa}{f} n - \ds \frac{1}{f} l;
\end{array} \qquad \quad
\begin{array}{l}
\vspace{2mm}
\widetilde{\nabla}_X N_1 = 0;\\
\vspace{2mm}
\widetilde{\nabla}_Y N_1 = \ds -\frac{\varkappa}{f} t;\\
\vspace{2mm}
\widetilde{\nabla}_X N_2 = \ddot{g} l + \ddot{f} e_4; \\
\vspace{2mm}
\widetilde{\nabla}_Y N_2 = \ds \frac{\dot{g}}{f} t.
\end{array} 
$$
Now, having in mind  \eqref{E:Eq-mer-tangent} and \eqref{E:Eq-mer-normal}, the formulas above imply:
\begin{equation} \label{E:Eq-mer-derivative-1}
\begin{array}{ll}
\vspace{2mm}
\widetilde{\nabla}_X X = \quad \quad \qquad \qquad - \varkappa_m n_2; & \qquad \widetilde{\nabla}_X N_1 = 0;\\
\vspace{2mm}
\widetilde{\nabla}_X Y = 0; & \qquad \widetilde{\nabla}_Y N_1 = \quad \quad \ds -\frac{\varkappa}{f} y;\\
\vspace{2mm}
\widetilde{\nabla}_Y X = \qquad \ds \frac{\dot{f}}{f}y; & \qquad \widetilde{\nabla}_X N_2 = - \varkappa_m x; \\
\vspace{2mm}
\widetilde{\nabla}_Y Y = \ds \frac{\dot{f}}{f} x  \quad +\frac{\varkappa}{f} n_1 - \frac{\dot{g}}{f} n_2; & \qquad \widetilde{\nabla}_Y N_2 =  \quad \qquad \ds \frac{\dot{g}}{f} y. 
\end{array} 
\end{equation}
Equations \eqref{E:Eq-mer-derivative-1} are the derivative formulas of the surface with respect to the parameters $(u,v)$.
From these formulas we can calculate the Gauss curvature $K$, the curvature of the normal connection $K^{\bot}$, and the mean curvature vector field $H$  of the meridian surface $\M_m$. 

The Gauss curvature $K$ is defined by the following formula:
$$
K = \frac{\langle \sigma(X,X) , \sigma(Y,Y)\rangle - \langle \sigma(X,Y) , \sigma(X,Y)\rangle}{\langle X,X\rangle \langle Y,Y\rangle - \langle X,Y\rangle ^2}.
$$
Hence, using \eqref{E:Eq-mer-derivative-1}, we get that the  Gauss curvature of $\M_m$ is expressed as follows: 
\begin{equation}\label{Eq:meridianSurfGaussCurv}
K = \frac{\ddot{f}(u)}{f(u)}.
\end{equation}
The above formula shows that the Gauss curvature depends only on the parameter $u$ since it is expressed in terms of the function $f(u)$ determining the meridian curve $m$.
 
The curvature of the normal connection (also called normal curvature) is defined by:
$$
K^{\bot} = \frac{\langle R^D(X,Y)N_1, N_2\rangle}{\langle X,X\rangle \langle Y,Y\rangle - \langle X,Y\rangle ^2},
$$
where 
$$R^D(X,Y)N = D_X D_Y N - D_Y D_X N - D_{[X,Y]}N.$$
Hence, by use of \eqref{E:Eq-mer-derivative-1}, we get that the  curvature of the normal connection of the meridian surface $\M_m$ is:
$$K^{\bot}=0.$$
The above formula implies the next statement.

\begin{prop}
Each timelike meridian surface $\M_m$ of elliptic type, defined by \eqref{Eq:meridian_surf}, is a surface with flat normal connection.
\end{prop}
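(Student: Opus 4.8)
The plan is to extract the normal connection $D$ directly from the derivative formulas \eqref{E:Eq-mer-derivative-1} and to show that the orthonormal normal frame $\{N_1,N_2\}$ is parallel in the normal bundle; the flatness of $D$, and hence $K^{\bot}=0$, will then be immediate. The computation is in fact already embedded in \eqref{E:Eq-mer-derivative-1}, so the real content of the argument is its correct interpretation via the Weingarten decomposition.

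First I would apply the Weingarten formula $\widetilde{\nabla}_x \xi = -A_{\xi} x + D_x \xi$, in which the normal part of $\widetilde{\nabla}_x \xi$ is precisely $D_x \xi$, to each of the four derivatives $\widetilde{\nabla}_X N_1$, $\widetilde{\nabla}_Y N_1$, $\widetilde{\nabla}_X N_2$, $\widetilde{\nabla}_Y N_2$ read off from \eqref{E:Eq-mer-derivative-1}. The crucial observation is that $\widetilde{\nabla}_X N_1 = 0$, while the remaining three derivatives are purely tangential, each being a multiple of $x$ or $y$; their normal components therefore vanish, which yields
\begin{equation*}
D_X N_1 = D_Y N_1 = D_X N_2 = D_Y N_2 = 0.
\end{equation*}
Since $D$ is function-linear in its tangent-direction slot and $\{X,Y\}$ is a tangent frame, this gives $D_Z N_1 = D_Z N_2 = 0$ for every tangent field $Z$; that is, $\{N_1,N_2\}$ is a parallel orthonormal frame field of the normal bundle.

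Second, I would substitute these vanishing values into $R^D(X,Y)N = D_X D_Y N - D_Y D_X N - D_{[X,Y]}N$. Taking $N = N_1$, the first term vanishes because $D_Y N_1 = 0$, the second because $D_X N_1 = 0$, and the third because $[X,Y]$ is a tangent field on which $D$ annihilates $N_1$. Hence $R^D(X,Y)N_1 = 0$ and in particular $\langle R^D(X,Y)N_1, N_2 \rangle = 0$. Using the displayed formula for $K^{\bot}$ together with the nonvanishing denominator $\langle X,X \rangle \langle Y,Y \rangle - \langle X,Y \rangle^2 = -1$, I conclude $K^{\bot} = 0$, which is exactly the claim that $\M_m$ has flat normal connection.

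There is essentially no obstacle here: the argument reduces to reading the normal parts off \eqref{E:Eq-mer-derivative-1}. The only point deserving a word of care is the passage from the vanishing of $D$ on the frame directions $X,Y$ to its vanishing in the arbitrary tangent direction $[X,Y]$, which is immediate from the function-linearity of the normal connection in its lower slot.
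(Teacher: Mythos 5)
Your proof is correct and follows the same route the paper takes: the paper simply asserts that $K^{\bot}=0$ follows "by use of \eqref{E:Eq-mer-derivative-1}", and your argument is the natural fleshing-out of that claim — reading off that $\widetilde{\nabla}_X N_i$ and $\widetilde{\nabla}_Y N_i$ have no normal components, so $DN_1=DN_2=0$ and hence $R^D=0$. No issues.
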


Using \eqref{E:Eq-mer-derivative-1} we find that the normal mean curvature vector field $H$  depends on both parameters $u$ and $v$ and is given by the formula:
\begin{equation} \label{E:Eq-mer-H}
H = \frac{\varkappa (v)}{2f(u)} \, n_1 - \frac{1 + \dot{f}^2(u) + f(u) \ddot{f}(u)}{2f(u) \sqrt{\dot{f}^2(u) + 1}} \, n_2.
\end{equation}

\section{Timelike meridian surfaces of elliptic type with constant Gauss curvature}

In this section we describe all timelike meridian surfaces of elliptic type in $\R^4_1$ with constant Gauss curvature. First we consider the case when the Gauss curvature is zero, i.e. the surface is flat.

The flat timelike meridian surfaces of elliptic type are characterized in the following theorem.

\begin{thm}\label{Th:meridSurfFlat} Let $\M_m$ be a timelike meridian surface of elliptic type, defined by \eqref{Eq:meridian_surf}. Then, $\M_m$ is a flat surface if and only if the meridian curve $m$ is given by:
$$
\begin{array}{l}
\vspace{2mm}
f(u) = a \,u + b; \quad a = \const, \, b =\const;\\
\vspace{2mm}
g(u) = \pm \sqrt{a^2+1} \,u + c; \quad c = \const.
\end{array}
$$
\end{thm}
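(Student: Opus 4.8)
The plan is to read off the result directly from the explicit expression for the Gauss curvature obtained earlier, namely equation \eqref{Eq:meridianSurfGaussCurv}, which states that $K = \ds\frac{\ddot{f}(u)}{f(u)}$. Since by construction $f(u) > 0$ for all $u \in I$, the denominator never vanishes, so the flatness condition $K \equiv 0$ is equivalent to the single scalar ODE $\ddot{f}(u) = 0$ on $I$. This reduces the geometric hypothesis to an elementary statement about the meridian function $f$.

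Next I would integrate. Solving $\ddot{f} = 0$ gives $f(u) = a\,u + b$ with $a, b = \const$, which is the first asserted formula. To recover $g$, I would invoke the arc-length normalization $\dot{f}^2(u) - \dot{g}^2(u) = -1$ imposed without loss of generality in the construction. Since $\dot{f}(u) = a$ is now constant, this yields $\dot{g}^2(u) = \dot{f}^2(u) + 1 = a^2 + 1$, hence $\dot{g}(u) = \pm\sqrt{a^2+1}$ is a (nonzero) constant, and a single integration gives $g(u) = \pm\sqrt{a^2+1}\,u + c$ with $c = \const$. I would note in passing that the compatibility relation $\dot{f}\ddot{f} - \dot{g}\ddot{g} = 0$ derived from the normalization is automatically satisfied here, since $\ddot{f} = \ddot{g} = 0$.

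For the converse direction I would substitute the two linear functions back into the construction and verify that the resulting surface genuinely meets all the standing assumptions: the meridian curve satisfies $\dot{f}^2 - \dot{g}^2 = a^2 - (a^2+1) = -1 < 0$, so $\M_m$ is indeed timelike, and $f(u) = a u + b > 0$ holds on a suitable subinterval of $I$; finally $\ddot{f} = 0$ gives $K = \ddot{f}/f = 0$, so the surface is flat. This closes the equivalence.

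I do not expect a genuine obstacle here: the forward implication is immediate once the Gauss curvature formula is in hand, and the only point requiring a little care is ensuring that the determination of $g$ is driven entirely by the arc-length normalization rather than by any independent condition, so that the $\pm$ sign and the integration constant $c$ exhaust all freedom. The argument is therefore essentially a two-step integration together with a routine verification of the converse.
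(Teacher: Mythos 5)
Your argument is correct and follows the same route as the paper: read off flatness from the formula $K=\ddot f/f$ (using $f>0$), integrate $\ddot f=0$ to get $f=au+b$, and then recover $g$ from the normalization $\dot g=\pm\sqrt{\dot f^2+1}$. The only difference is that you spell out the converse verification explicitly, which the paper leaves implicit.
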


\begin{proof} 
If $\M_m$ is a timelike meridian surface, defined by \eqref{Eq:meridian_surf}, the Gauss curvature is expressed by formula \eqref{Eq:meridianSurfGaussCurv}. 
Hence, $K=0$ if and only if $\ddot{f}(u)=0$. The last equality implies that the function $f(u)$ is given by 
\begin{equation} \label{Eq:flat}
f(u) = a \,u + b
\end{equation}
 for some constants $a$ and $b$. 
Using that the function $g(u)$ is determined by $\dot{g}(u) = \pm \sqrt{\dot{f}^2(u)+1}$, from \eqref{Eq:flat} we obtain 
$$g(u) = \pm \sqrt{a^2+1} \,u + c, $$
where $c = \const$.
\end{proof}

In the next theorem  we describe all timelike meridian surfaces of elliptic type with constant non-zero Gauss curvature.

\begin{thm}\label{Th:meridSurfConstK} Let $\M_m$ be a timelike meridian surface of elliptic type,  defined by \eqref{Eq:meridian_surf}. Then, $\M_m$ has constant non-zero Gauss curvature $K$ if and only if the meridian curve $m$ is given by:
$$
\begin{array}{l}
\vspace{2mm}
f(u) = a_1 \cos \sqrt{-K} \,u + a_2 \sin \sqrt{-K} \, u,\quad \text{if} \,\, K < 0;\\
\vspace{2mm}
f(u) = a_1 \cosh \sqrt{K} \,u + a_2 \sinh \sqrt{K}\, u,\quad \text{if} \,\, K > 0,
\end{array}
$$
where $a_1$ and $a_2$ are constants and the function $g(u)$ is determined by $\dot{g}(u) = \pm \sqrt{\dot{f}^2(u)+1}$.
\end{thm}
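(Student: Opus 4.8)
The plan is to read off the constant-Gauss-curvature condition directly from the explicit formula \eqref{Eq:meridianSurfGaussCurv}, which expresses $K$ solely in terms of the meridian generating function $f$. Since \eqref{Eq:meridianSurfGaussCurv} states $K = \ddot f(u)/f(u)$ and $f(u) > 0$, demanding that the Gauss curvature equal a fixed nonzero constant $K$ is \emph{equivalent} to requiring that $f$ solve the second-order linear homogeneous ODE with constant coefficients
\begin{equation*}
\ddot f(u) = K\, f(u).
\end{equation*}
Thus the entire geometric problem collapses to an elementary ODE, and both directions of the ``if and only if'' will follow from solving it: any $f$ satisfying this equation produces, via \eqref{Eq:meridianSurfGaussCurv}, a surface of constant Gauss curvature $K$, and conversely.

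Next I would solve this ODE by splitting on the sign of $K$, using the characteristic equation $r^2 = K$. When $K < 0$ the roots are purely imaginary, $r = \pm i\sqrt{-K}$, so the general solution is the trigonometric expression $f(u) = a_1 \cos\sqrt{-K}\,u + a_2 \sin\sqrt{-K}\,u$; when $K > 0$ the roots are real, $r = \pm\sqrt{K}$, yielding the hyperbolic expression $f(u) = a_1 \cosh\sqrt{K}\,u + a_2 \sinh\sqrt{K}\,u$. These are exactly the two forms asserted in the statement, with $a_1, a_2$ arbitrary constants of integration.

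Finally, I would recover the second generating function $g$ from the normalization imposed in the construction. Recall that for timelike meridian surfaces we assumed $\dot f^2(u) - \dot g^2(u) = -1$, equivalently $\dot g(u) = \pm\sqrt{\dot f^2(u) + 1}$, which determines $g$ by a single integration (up to an additive constant, reflecting a translation along the $Oe_4$-axis). This gives the clause on $g$ in the theorem and completes both implications.

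I do not expect a genuine obstacle here: once the Gauss curvature formula \eqref{Eq:meridianSurfGaussCurv} is available from the derivative formulas \eqref{E:Eq-mer-derivative-1}, the argument is a routine integration of a constant-coefficient linear ODE, and the only point requiring care is the bookkeeping of the sign of $K$ to select the trigonometric versus the hyperbolic solution family.
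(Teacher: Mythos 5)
Your proposal matches the paper's proof essentially line for line: both reduce the condition to the constant-coefficient ODE $\ddot{f} - K f = 0$ via formula \eqref{Eq:meridianSurfGaussCurv}, split on the sign of $K$ to obtain the trigonometric and hyperbolic solution families, and recover $g$ from $\dot{g} = \pm\sqrt{\dot{f}^2+1}$. The argument is correct and no further comment is needed.
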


\begin{proof}
Let $\M_m$ be a timelike meridian surface, defined by \eqref{Eq:meridian_surf}. Using  \eqref{Eq:meridianSurfGaussCurv}, we get that the Gauss curvature $K$ is constant if and only if the function $f(u)$ satisfies the following differential equation:
$$
\ddot{f}(u) - K f(u) = 0.
$$
The general solution of the equation above is given by:
$$
\begin{array}{l}
\vspace{2mm}
f(u) = a_1 \cos \sqrt{-K} \,u + a_2 \sin \sqrt{-K} \, u,\quad \text{in the case} \,\, K < 0;\\
\vspace{2mm}
f(u) = a_1 \cosh \sqrt{K} \,u + a_2 \sinh \sqrt{K}\, u,\quad \text{in the case} \,\, K > 0,
\end{array}
$$
where $a_1$ and $a_2$ are constants. The function $g(u)$ is determined by $\dot{g}(u) = \pm \sqrt{\dot{f}^2(u)+1}$.
\end{proof}

\section{Timelike meridian surfaces of elliptic type with constant mean curvature}

In this section we give a classification of timelike meridian surfaces of elliptic type in $\R^4_1$ with  constant mean curvature vector field.
First we consider the minimal case, i.e. $H = 0$. 

The classification of all minimal timelike meridian surfaces of elliptic type is given in the next theorem.

\begin{thm}\label{Th:meridSurfMin} 
Let $\M_m$ be a timelike meridian surface of elliptic type, defined by \eqref{Eq:meridian_surf}. Then, $\M_m$ is minimal if and only if the curve $\textbf{c}$ on $S^2(1)$ has zero spherical curvature and the meridian curve $m$ is given by
$$\begin{array}{l}
\vspace{2mm}
f(u) = \pm \sqrt{-u^2 +2au+b};\\
\vspace{2mm}
g(u) = \pm \sqrt{a^2+b} \arcsin \frac{u-a}{\sqrt{a^2+b}} + c,
\end{array}
$$
where $a = const$, $b=const$, $c = const$. 
\end{thm}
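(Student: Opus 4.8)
The plan is to read off the minimality condition directly from the explicit expression \eqref{E:Eq-mer-H} for the mean curvature vector field and then integrate the resulting ordinary differential equations. Since $\{n_1, n_2\}$ is an orthonormal frame field of the normal bundle, the equation $H = 0$ is equivalent to the simultaneous vanishing of both coefficients in \eqref{E:Eq-mer-H}. Because $f(u) > 0$, the coefficient of $n_1$ vanishes precisely when the spherical curvature satisfies $\varkappa(v) = 0$, which is exactly the condition that the curve $\textbf{c}$ on $S^2(1)$ has zero spherical curvature. The coefficient of $n_2$ vanishes precisely when
$$1 + \dot{f}^2(u) + f(u)\ddot{f}(u) = 0,$$
so the remaining task is to solve this second-order ODE for $f$ and then recover $g$.

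The key observation for integrating the ODE is that its left-hand side is an exact derivative: $\dot{f}^2 + f\ddot{f} = \frac{d}{du}(f\dot{f})$, so the equation reads $\frac{d}{du}(f\dot{f}) = -1$. A first integration gives $f\dot{f} = -u + a = \frac{1}{2}\frac{d}{du}(f^2)$ for a constant $a$, and a second integration yields $f^2 = -u^2 + 2au + b$ for a further constant $b$, that is $f(u) = \pm\sqrt{-u^2 + 2au + b}$, as claimed.

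To recover $g$ I would use the normalization $\dot{g}(u) = \pm\sqrt{\dot{f}^2(u) + 1}$ fixed in the construction of the surface. Differentiating $f^2 = -u^2 + 2au + b$ gives $f\dot{f} = a - u$, hence $\dot{f}^2 = (a-u)^2/f^2$, and a short computation using $f^2 = (a^2 + b) - (u-a)^2$ produces the clean cancellation $\dot{f}^2 + 1 = (a^2 + b)/f^2$. Therefore $\dot{g} = \pm\sqrt{a^2+b}\,/\sqrt{-u^2 + 2au + b}$, and completing the square under the radical reduces the integral to the standard arcsine form, giving $g(u) = \pm\sqrt{a^2+b}\,\arcsin\frac{u-a}{\sqrt{a^2+b}} + c$ with $c = \const$.

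The converse direction is immediate by reversing the computation: a meridian curve of the stated form, together with $\varkappa \equiv 0$, forces both coefficients in \eqref{E:Eq-mer-H} to vanish, so $\M_m$ is minimal. There is no serious obstacle here; the only mildly delicate points are recognizing that the ODE for $f$ is a total derivative (which trivializes the integration) and carrying out the cancellation that turns $\dot{f}^2 + 1$ into $(a^2+b)/f^2$, without which the integral defining $g$ would not be elementary.
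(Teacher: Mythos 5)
Your proposal is correct and follows essentially the same route as the paper: read off $\varkappa=0$ and $1+\dot{f}^2+f\ddot{f}=0$ from the vanishing of the two components of $H$ in \eqref{E:Eq-mer-H}, integrate the ODE for $f$, and recover $g$ from $\dot{g}=\pm\sqrt{\dot{f}^2+1}$. The only difference is that you supply the integration details (the total-derivative observation $\frac{d}{du}(f\dot{f})=-1$ and the cancellation $\dot{f}^2+1=(a^2+b)/f^2$) that the paper states without computation; these are correct.
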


\begin{proof} 
Let $\M_m$ be a timelike meridian surface of elliptic type, defined by \eqref{Eq:meridian_surf}. Then, the mean curvature vector field is expressed by formula \eqref{E:Eq-mer-H}, and hence the surface 
 is minimal if and only if the spherical curvature of $\textbf{c}$ is $\varkappa = 0$ and the function $f(u)$ satisfies the following equation 
$$1 + \dot{f}^2 + f \ddot{f} = 0.$$
The solutions of this differential equation are expressed by the following formula:
$$
f(u) = \pm \sqrt{-u^2+2a u + b}, \quad a= \const, \, b = \const.
$$
Using that $\dot{g} = \pm\sqrt{\dot{f}^2+1}$, we get  the following equation for $g(u)$:
$$
\dot{g}= \pm \frac{\sqrt{a^2+b}}{\sqrt{-u^2+2a u + b}},
$$
Integrating the above equation we obtain
$$g(u) = \pm \sqrt{a^2+b} \arcsin \frac{u-a}{\sqrt{a^2+b}} + c, \quad c = const.$$

\end{proof}

\begin{cor}
There are no minimal timelike meridian surfaces of elliptic type in  $\R^4_1$ other than surfaces lying in a hyperplane of $\R^4_1$.
\end{cor}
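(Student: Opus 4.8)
The plan is to read off the conclusion directly from Theorem \ref{Th:meridSurfMin}, whose proof already isolates the crucial constraint: a timelike meridian surface of elliptic type is minimal only if the spherical curve $\textbf{c}$ on $S^2(1)$ has vanishing spherical curvature, $\varkappa \equiv 0$. So the whole content of the corollary is to show that $\varkappa \equiv 0$ forces $\M_m$ into a hyperplane of $\R^4_1$; the explicit formulas for $f$ and $g$ play no role here.

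First I would feed $\varkappa = 0$ into the Frenet formulas for $\textbf{c}$. The equation $n' = -\varkappa t$ then gives $n' = 0$, so the vector field $n(v)$ is a fixed unit vector $n_0 \in \R^3 = \span\{e_1, e_2, e_3\}$, independent of $v$. Since $\{l(v), t(v), n(v)\}$ is an orthonormal frame of $\R^3$ at every $v$, we have $\langle l(v), n_0 \rangle = 0$ for all $v \in J$; equivalently, $\frac{d}{dv}\langle l, n_0\rangle = \langle t, n_0\rangle = 0$ combined with the orthogonality $l \perp n$ at any single point. Hence the whole spherical curve $\textbf{c}$, and in particular $l(v)$, lies in the fixed two-dimensional subspace $\Pi := n_0^{\perp} \cap \R^3$ of $\R^3$; geometrically, $\textbf{c}$ is (an arc of) a great circle of $S^2(1)$.

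It then remains to assemble the surface. From the parametrization \eqref{Eq:meridian_surf}, $z(u,v) = f(u)\, l(v) + g(u)\, e_4$, and since $l(v) \in \Pi$ for all $v$ while $e_4$ is a fixed vector, every point $z(u,v)$ lies in the three-dimensional linear subspace $\Pi \oplus \span\{e_4\}$ of $\R^4_1$. This subspace is a hyperplane through the origin, so $\M_m$ is contained in a hyperplane, which is exactly the assertion.

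I do not expect any genuine obstacle: the only step with content is the passage from $\varkappa = 0$ to the planarity of $\textbf{c}$, and that is immediate from the Frenet equation $n' = -\varkappa t$ together with the orthonormality $l \perp n$. The one mild point to keep straight is that the axis direction $e_4$ enters $z(u,v)$ at every point, so collapsing the spherical factor from the $2$-sphere to a great circle drops the ambient dimension of the surface exactly from $4$ to $3$, producing a hyperplane rather than some lower-dimensional flat.
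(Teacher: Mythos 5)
Your proposal is correct and follows essentially the same route as the paper: both arguments hinge on the fact that $\varkappa=0$ forces the normal field $N_1=n(v)$ to be constant (you read this off the Frenet equation $n'=-\varkappa t$, the paper reads it off $\widetilde{\nabla}_X N_1=\widetilde{\nabla}_Y N_1=0$ in \eqref{E:Eq-mer-derivative-1}), whence the surface lies in the fixed $3$-dimensional subspace orthogonal to $n_0$. Your explicit identification of that hyperplane as $\Pi\oplus\span\{e_4\}$ directly from the parametrization \eqref{Eq:meridian_surf} is a slightly more concrete rendering of the same conclusion.
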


\begin{proof}
In the case  $\M_m$  is minimal, the  spherical curvature of $\textbf{c}$ is $\varkappa = 0$. Hence,  from \eqref{E:Eq-mer-derivative-1} it follows that $\widetilde{\nabla}_X N_1 = 0, \; \widetilde{\nabla}_Y N_1 = 0$, which  means that the normal vector field $N_1$ is constant. So,  the meridian surface $\M_m$ lies in the constant 3-dimensional space $\R^3_1 = \span \{X,Y,N_2\}$. Consequently, $\M_m$ lies in a hyperplane of $\R^4_1$.

\end{proof}

In the next theorem we classify all timelike meridian surfaces with non-zero constant mean curvature (CMC-surfaces).

\begin{thm}\label{Th:meridSurfConstH} 
Let $\M_m$ be a timelike meridian surface of elliptic type,  defined by \eqref{Eq:meridian_surf}. Then, $\M_m$ has constant mean curvature vector field, i.e. $|| H || = a = \const, a \neq 0$,  if and only if the curve $\textbf{c}$ on $S^2(1)$ has constant spherical curvature $\varkappa = \const = b, b\neq 0$ and the meridian curve $m$ is given by $\dot{f} = \varphi (f)$, where
$$
\varphi(t) = \pm  \sqrt{ \frac{1}{t^2} \left(c \pm \frac{t}{2}  \sqrt{4 a^2 t^2-b^2} \mp \frac{b^2}{4a} \ln | 2a t +   \sqrt{4 a^2 t^2-b^2}| \right)^2 -1}, \quad c = \const,
$$
the function $g(u)$ is defined by $\dot{g} = \pm\sqrt{\dot{f}^2+1}$.
\end{thm}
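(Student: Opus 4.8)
The plan is to read off $\|H\|^2$ directly from the expression \eqref{E:Eq-mer-H} for the mean curvature vector field. Since $\{n_1,n_2\}$ is an orthonormal frame of the \emph{spacelike} normal plane (by \eqref{E:Eq-mer-normal} both $N_1$ and $N_2$ are unit spacelike), squaring and adding the two coefficients in \eqref{E:Eq-mer-H} gives
\[
\|H\|^2 = \frac{\varkappa^2(v)}{4f^2(u)} + \frac{\bigl(1+\dot f^2(u)+f(u)\ddot f(u)\bigr)^2}{4f^2(u)\bigl(\dot f^2(u)+1\bigr)}.
\]
The first step is a separation-of-variables observation: the only $v$-dependent quantity is $\varkappa(v)$, so differentiating the condition $\|H\|^2\equiv a^2$ with respect to $v$ and using $f>0$ yields $\varkappa\varkappa'=0$, i.e. $(\varkappa^2)'=0$. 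Hence $\varkappa$ is a constant $b$. (The sub-case $b=0$ forces $N_1=n(v)$ to be constant by the Frenet formulas, so the surface lies in a hyperplane of $\R^4_1$, exactly as in the minimal corollary; the genuinely $4$-dimensional CMC surfaces correspond to $b\neq0$.)

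With $\varkappa\equiv b$, substituting back and clearing the factor $1/(4f^2)$ turns $\|H\|^2=a^2$ into the second-order ODE
\[
\frac{\bigl(1+\dot f^2+f\ddot f\bigr)^2}{\dot f^2+1} = 4a^2f^2-b^2,\qquad\text{equivalently}\qquad \frac{1+\dot f^2+f\ddot f}{\sqrt{\dot f^2+1}} = \pm\sqrt{4a^2f^2-b^2}.
\]
The heart of the argument --- and the step I expect to be the main obstacle --- is integrating this second-order equation in closed form. The key is the identity
\[
\frac{d}{du}\Bigl(f\sqrt{\dot f^2+1}\Bigr) = \frac{\dot f\,\bigl(1+\dot f^2+f\ddot f\bigr)}{\sqrt{\dot f^2+1}},
\]
which shows that multiplying the ODE by $\dot f$ makes its left-hand side an exact $u$-derivative, while its right-hand side becomes $\pm\dot f\sqrt{4a^2f^2-b^2}=\pm\frac{d}{du}\!\left[\int\sqrt{4a^2f^2-b^2}\,df\right]$.

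Integrating with respect to $u$ then produces the first integral
\[
f\sqrt{\dot f^2+1} = c \pm\int\sqrt{4a^2f^2-b^2}\,df,\qquad c=\const,
\]
and evaluating the elementary integral $\int\sqrt{4a^2t^2-b^2}\,dt=\frac{t}{2}\sqrt{4a^2t^2-b^2}-\frac{b^2}{4a}\ln\bigl|2at+\sqrt{4a^2t^2-b^2}\bigr|$ gives $f\sqrt{\dot f^2+1}=\Phi(f)$, with $\Phi$ the bracketed expression in the statement. Solving the resulting relation $\dot f^2+1=\Phi(f)^2/f^2$ for $\dot f$ yields exactly $\dot f=\varphi(f)$, and $g$ is recovered from $\dot g=\pm\sqrt{\dot f^2+1}$, which settles necessity. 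The converse is a direct reversal: assuming $\varkappa\equiv b\neq0$ and $\dot f=\varphi(f)$, one squares to get $f^2(\dot f^2+1)=\Phi(f)^2$, differentiates once to recover the second-order ODE, and substitutes into $\|H\|^2$ to obtain the constant value $a^2$. Beyond the closed-form integration, the only remaining care is the bookkeeping of the coupled $\pm/\mp$ signs arising from the three square-root choices and from the domain constraint $4a^2f^2\ge b^2$.
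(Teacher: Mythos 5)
Your proposal is correct and follows essentially the same route as the paper: read $\|H\|$ off from \eqref{E:Eq-mer-H}, separate variables to conclude $\varkappa=\const=b$ together with the ODE $(1+\dot f^2+f\ddot f)^2=(\dot f^2+1)(4a^2f^2-b^2)$, and integrate once to get $\dot f=\varphi(f)$. Your exact-derivative identity for $f\sqrt{\dot f^2+1}$ is just a repackaging of the paper's substitution $z=\sqrt{\varphi^2+1}$ (which gives $(tz)'=\pm\sqrt{4a^2t^2-b^2}$), and in fact supplies the integration detail that the paper's proof states without derivation.
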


\begin{proof} 
Let $\M_m$ be a timelike meridian surface of elliptic type,  defined by \eqref{Eq:meridian_surf}. Then, the mean curvature vector field $H$ is given by formula  \eqref{E:Eq-mer-H}, which implies that  $|| H || = \const = a$ if and only if the following equality holds:
$$
\varkappa^2 = \frac{4a^2 f^2  (\dot{f}^2 + 1) - (1 + \dot{f}^2 + f \ddot{f})^2}{\dot{f}^2 + 1}.
$$
Using that the left-hand side of the above equality is a function of the parameter $v$ and the right-hand side is a function of $u$, we conclude:
\begin{equation}\label{Eq:meridianSurfMeanCurvConstant}
\begin{array}{l}
\vspace{2mm}
\varkappa = \const = b, \,\, b \neq 0; \\
\vspace{2mm}
4a^2 f^2  (\dot{f}^2 + 1) - (1 + \dot{f}^2 + f \ddot{f})^2 = b^2 (\dot{f}^2 + 1).
\end{array}
\end{equation}
From the first equality of \eqref{Eq:meridianSurfMeanCurvConstant} it follows that the spherical curve $\textbf{c}$ has constant spherical curvature $\varkappa = b$, i.e. $\textbf{c}$ is a circle on $S^2(1)$. From the second equality of \eqref{Eq:meridianSurfMeanCurvConstant} we get the following differential equation:
\begin{equation}\label{Eq:meridianSurfMeanCurvConstantDifEq}
(1 + \dot{f}^2 + f \ddot{f})^2 = (\dot{f}^2 + 1)(4a^2 f^2 - b^2).
\end{equation}
Setting $\dot{f} = \varphi(f)$ in equation  \eqref{Eq:meridianSurfMeanCurvConstantDifEq}, we get that the function $\varphi = \varphi(t)$ is a solution to the following differential equation:
$$
1 + \varphi^2 + \frac{t}{2} (\varphi^2)' = \pm \sqrt{\varphi^2 + 1}\sqrt{4a^2 t^2 - b^2}.
$$
The general solution of the equation above is given by the formula:
\begin{equation}\label{Eq:meridianSurfMeanCurvSol}
\varphi(t) = \pm  \sqrt{ \frac{1}{t^2} \left(c \pm \frac{t}{2}  \sqrt{4 a^2 t^2-b^2} \mp \frac{b^2}{4a} \ln | 2a t +   \sqrt{4 a^2 t^2-b^2}| \right)^2 -1}, 
\end{equation}
where $c = \const$. The function $f$ is determined by $\dot{f} = \varphi(f)$ and \eqref{Eq:meridianSurfMeanCurvSol}, and the function $g$ is defined by $\dot{g} = \pm \sqrt{\dot{f}^2+1}$.

\end{proof}

\section{Timelike meridian surfaces of elliptic type with parallel mean curvature vector field}

Let $\M_m$ be a  timelike meridian surface of elliptic type. Then its mean curvature vector field $H$ is expressed by  \eqref{E:Eq-mer-H}. We consider the non-minimal case, i.e. we assume that $H \neq 0$.
The mean curvature vector field  $H$ is parallel in the normal bundle if and only if $D_X H = D_Y H = 0$.  
Using \eqref{E:Eq-mer-derivative-1} we calculate $D_X H$ and $D_Y H$ and obtain the following expressions:
\begin{equation}\label{Eq:MeanCurvDeriv}
\begin{array}{l}
\vspace{.2cm}
D_X H = - \frac{\varkappa \dot{f}}{2f^2} \,n_1 - \frac{\partial}{\partial u} \left(\frac{1 + \dot{f}^2 + f \ddot{f}}{2f \sqrt{\dot{f}^2 + 1}}\right)\,n_2; \\
\vspace{.2cm} 
D_Y H = \frac{\varkappa'}{2f^2} \,n_1.
\end{array}
\end{equation}

In the next theorem we describe all timelike meridian surfaces of elliptic type with parallel mean curvature vector field.

\begin{thm}\label{Th:meridSurfParralelH}
Let $\M_m$ be a timelike  meridian surface of elliptic type, defined by \eqref{Eq:meridian_surf}. Then,  $\M_m$ has parallel mean curvature vector field, if and only if one of the following cases holds:

\hskip 10mm (i)  $\textbf{c}$ has zero spherical curvature and the meridian 
$m$ is determined by $\dot{f} = \varphi(f)$ where 
\begin{equation}
\varphi(t) = \pm \frac{1}{t} \sqrt{(c + a\, t^2)^2 -t^2}, \quad a = const
\neq 0, \quad c = const,  \notag
\end{equation}
$g(u)$ is defined by $\dot{g} = \pm \sqrt{\dot{f}^2+1}$. In this case, $\M_m$ is a non-flat CMC-surface lying in a hyperplane of $\mathbb{E}^4_1$.

\hskip 10mm (ii)  $\textbf{c}$ has non-zero constant spherical curvature and the
meridian $m$ is determined by $f(u) = a$, $g(u) = \pm u + b$, where $a = const
\neq 0$, $b=const$. In this case, $\M_m$ is a flat CMC-surface lying in
a hyperplane of $\mathbb{E}^4_1$.
\end{thm}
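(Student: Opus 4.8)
The plan is to read the parallel condition $DH=0$ directly off the already-computed expressions \eqref{Eq:MeanCurvDeriv} for $D_XH$ and $D_YH$, and then to integrate the resulting ordinary differential equations for the meridian. Since $\{n_1,n_2\}$ is a frame of the normal bundle, $\M_m$ has parallel mean curvature vector field exactly when every component in \eqref{Eq:MeanCurvDeriv} vanishes. From $D_YH=0$ I read off $\varkappa'(v)=0$, hence $\varkappa=\const=b$; from $D_XH=0$ I read off the two scalar equations $\varkappa\dot f=0$ and $\frac{\partial}{\partial u}\bigl(\frac{1+\dot f^2+f\ddot f}{2f\sqrt{\dot f^2+1}}\bigr)=0$. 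The factorization $\varkappa\dot f=0$ is what splits the argument into the two announced cases, according to whether $\varkappa$ vanishes or not.

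In the case $\varkappa=0$ (case (i)) the $n_1$-components of both derivatives vanish automatically, and the only remaining requirement is that the $n_2$-coefficient of $H$ in \eqref{E:Eq-mer-H} be a nonzero constant, which I call $a$; non-minimality forces $a\neq0$, and then $\|H\|=|a|$, so the surface is automatically CMC. Writing $\dot f=\varphi(f)$ turns $1+\dot f^2+f\ddot f=2af\sqrt{\dot f^2+1}$ into a first-order equation for $\varphi=\varphi(t)$. The key step is to multiply this equation by $2t$ and recognize the left-hand side as an exact derivative, namely
\[
\frac{d}{dt}\bigl[t^2(\varphi^2+1)\bigr]=4a\,t\,\sqrt{t^2(\varphi^2+1)},
\]
so that with $W=t^2(\varphi^2+1)$ it becomes $W'=4at\sqrt W$, which integrates to $\sqrt W=c+at^2$ and gives $\varphi(t)=\pm\frac1t\sqrt{(c+at^2)^2-t^2}$; equivalently this is the $b=0$ specialization of \eqref{Eq:meridianSurfMeanCurvSol}. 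The function $g$ is then recovered from $\dot g=\pm\sqrt{\dot f^2+1}$. For the geometric statement, $\varkappa=0$ makes $\widetilde\nabla_XN_1=\widetilde\nabla_YN_1=0$ in \eqref{E:Eq-mer-derivative-1}, so $N_1$ is a constant vector and $\langle z,N_1\rangle$ is constant, placing $\M_m$ in a hyperplane; computing $K=\ddot f/f=a^2-c^2/f^4$ via \eqref{Eq:meridianSurfGaussCurv} shows $K\not\equiv0$, so $\M_m$ is non-flat.

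In the case $\varkappa\neq0$ (case (ii)) the equation $\varkappa\dot f=0$ forces $\dot f\equiv0$, hence $f\equiv a=\const>0$; then $\dot g=\pm\sqrt{\dot f^2+1}=\pm1$ integrates to $g(u)=\pm u+b$, while $\varkappa=\const\neq0$ means $\textbf c$ is a circle on $S^2(1)$. With $f$ constant the $n_2$-equation holds identically, its coefficient reducing to $1/(2f)$, so the parallel condition is met. Here $K=\ddot f/f=0$ gives flatness, $\|H\|^2=(\varkappa^2+1)/(4f^2)=\const$ gives CMC, and, since the circle $\textbf c$ lies in an affine plane of $\R^3$ with some fixed unit normal $w$, the scalar $\langle z,w\rangle$ is constant, so $\M_m$ again lies in a hyperplane.

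For the converse one substitutes each of the two families back into \eqref{Eq:MeanCurvDeriv} and checks $D_XH=D_YH=0$, which is immediate once $\varkappa=\const$ together with either $\varkappa=0$ and the ODE (case (i)) or $f=\const$ (case (ii)) are imposed. The one genuinely non-routine step is the integration of the nonlinear ODE in case (i); the whole difficulty is concentrated in spotting the exact differential $d\bigl[t^2(\varphi^2+1)\bigr]$ that linearizes it (or, alternatively, in quoting the computation already carried out for Theorem \ref{Th:meridSurfConstH}, bearing in mind that that theorem was stated under $b\neq0$). Everything else is reading off components and elementary integration.
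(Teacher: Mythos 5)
Your proposal is correct and follows essentially the same route as the paper: the same case split on $\varkappa\dot f=0$ read off from \eqref{Eq:MeanCurvDeriv}, the same resulting ODEs and solution formulas, and the same conclusions about CMC, flatness and the hyperplane. The only cosmetic differences are that the paper integrates the case-(i) ODE via the substitution $z=\sqrt{\varphi^2+1}$, which turns \eqref{E:Eq-9} into the linear equation $z'+z/t=2a$, whereas you spot the exact derivative of $t^{2}(\varphi^{2}+1)$ (equivalent, since your $\sqrt{W}=tz$), and that in case (ii) the paper exhibits the constant normal $\bar N_1=(N_1\pm\varkappa N_2)/\sqrt{\varkappa^{2}+1}$ instead of appealing to the plane of the circle $\textbf{c}$ --- the two constant directions coincide.
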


\begin{proof}

Let $\M_m$ be a timelike  meridian surfaces of elliptic type with parallel mean curvature
vector field. Having in mind  formulas \eqref{Eq:MeanCurvDeriv}, we get the following conditions 
\begin{equation}  \label{E:Eq-7}
\begin{array}{l}
\vspace{2mm} \varkappa'(v) = 0; \\ 
\vspace{2mm} \varkappa \dot{f} = 0; \\ 
\vspace{2mm} \frac{{f \ddot{f}}+ \dot{f}^2 + 1}{2f \sqrt{\dot{f}^2 + 1}} =const.%
\end{array}%
\end{equation}
The first equality of \eqref{E:Eq-7} implies that the spherical curvature $\varkappa$ of the curve $\textbf{c}$ is constant. From the second equality of \eqref{E:Eq-7} we obtain that
there are two possible cases:

\vskip1mm 
Case (i): $\varkappa =0$, which means that the curve $\textbf{c}: l = l(v)$ is a great circle on $S^2(1)$. Moreover, 
$\widetilde{\nabla}_X N_1 = 0, \; \widetilde{\nabla}_Y N_1 = 0$, which  implies $N_1 = const$. So,  the surface $\M_m$ lies in the constant 3-dimensional space $\R^3_1 = \span \{X,Y,N_2\}$. 
From the third equality of \eqref{E:Eq-7} we get that the function $f(u)$  is a solution to the following differential equation: 
$$f \ddot{f}+ \dot{f}^2 + 1 = a \,2f \sqrt{\dot{f}^2 + 1},$$ 
where $a=const$. Since we consider non-minimal surfaces, we assume that $a\neq 0$. 
To find the solutions of the above differential equation we set $\dot{f}=\varphi (f)$ and obtain that the function $\varphi =\varphi (t)$ is a solution to the equation: 
\begin{equation} \label{E:Eq-9}
\frac{t}{2}\,(\varphi ^{2})^{\prime }+ \varphi^{2} + 1= 2at\sqrt{\varphi ^{2}+1}.
\end{equation}%
Now, setting $z(t)=\sqrt{\varphi ^{2}(t)+1}$, we transform equation \eqref{E:Eq-9} into the next equation
\begin{equation*}
z^{\prime }(t)+\frac{1}{t}\,z(t)= 2a,
\end{equation*}%
whose general solution is given by the formula $z(t)=%
\frac{c+ at^{2}}{t}$, $c=const$. Hence, by calculations we obtain that the general solution of  \eqref{E:Eq-9} is 
\begin{equation*}
\varphi (t)=\pm \frac{1}{t}\sqrt{(c+ a\,t^{2})^{2}-t^{2}}.  
\end{equation*}%
In this case, the mean curvature vector field is $H = -a\, N_2$, which implies that $\langle H,H\rangle =a^{2}=const$,  
so $\M_m$ is a CMC-surface. It can be calculated that the Gauss curvature in this case is expressed as $K =\frac{a^2 t^4 - c^2}{t^4}$. Consequently, 
 $\M_m$  is a non-flat CMC-surface lying in a constant hyperplane of $\R^4_1$.

\vskip 2mm Case (ii): $\varkappa \neq 0$, and hence, from the second equality of \eqref{E:Eq-7} we obtain that $f(u) = a$, $a = const \neq 0$.
Using that $\dot{f}^2-\dot{g}^2 = -1$, we get $g(u) = \pm u+b$, $b = const$. We calculate that, in this case, the mean curvature vector field is expressed as follows 
\begin{equation*}
H = \frac{\varkappa}{2a}\, n_1 - \frac{1}{2a} \, n_2,
\end{equation*}
which implies that $\langle H, H \rangle = \frac{1+\varkappa^2}{4a^2}=const$. Hence, the surface $\M_m$  has constant mean curvature. 
Moreover, in this case we get  $K=0$, i.e.  $\M_m$  is a flat surface. 
We consider the normal vector fields 
$$\bar{N}_1 = \frac{1}{\sqrt{\varkappa^2+1}} (N_1 \pm \varkappa\,N_2); \quad \bar{N}_2 = \frac{1}{\sqrt{\varkappa^2+1}} (\mp \varkappa\,N_1 + N_2).$$ 
Using \eqref{E:Eq-mer-derivative-1} we obtain $\widetilde{\nabla}_X \bar{N}_1 = \widetilde{\nabla}_Y \bar{N}_1 = 0$, which implies that $\M_m$ lies in the constant 3-dimensional space $\R^3_1 = \span \{X,Y,\bar{N}_2\}$. 
Finally, we obtain that the meridian surface  $\M_m$ is a flat CMC-surface lying in a hyperplane of $\mathbb{E}^4_1$.

\vskip 1mm Conversely, if one of the cases (i) or (ii) stated in the theorem
holds true, then by direct computations we get that $D_X H = D_YH = 0$, i.e.
the surface has parallel mean curvature vector field.

\end{proof}

\begin{cor}
There are no timelike meridian surfaces of elliptic type with parallel mean curvature vector field other than CMC-surfaces lying in a hyperplane of $\R^4_1$.
\end{cor}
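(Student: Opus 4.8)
The plan is to derive the statement directly from Theorem~\ref{Th:meridSurfParralelH}, which already provides a complete classification. That theorem establishes a clean dichotomy: every timelike meridian surface of elliptic type with parallel mean curvature vector field arises either in its case (i) or in its case (ii). Hence, to prove the corollary it suffices to observe that in each of these two cases the surface is simultaneously a CMC-surface and contained in a hyperplane of $\R^4_1$ — a fact recorded in the concluding sentence of each case. No new geometric input is required; the corollary merely extracts the common feature of the two output families.

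First I would recall the origin of the dichotomy. The parallelism requirement $D_X H = D_Y H = 0$, expanded by means of \eqref{Eq:MeanCurvDeriv}, forces the three relations \eqref{E:Eq-7}, of which the second, $\varkappa \dot{f} = 0$, is the branching point. Either $\varkappa = 0$, giving case (i), or $\varkappa \neq 0$, in which case $f$ must be constant, giving case (ii). This factored equation guarantees that there is no third possibility, so the case split is exhaustive. Then I would read off the two conclusions already contained in the proof of the theorem: in case (i) the vanishing spherical curvature yields $\widetilde{\nabla}_X N_1 = \widetilde{\nabla}_Y N_1 = 0$, so $N_1$ is constant and $\M_m \subset \span\{X,Y,N_2\}$, while $H = -a\,N_2$ gives $\langle H, H\rangle = a^2 = \const$; in case (ii) one passes to the rotated normal frame $\bar{N}_1, \bar{N}_2$, for which $\widetilde{\nabla}_X \bar{N}_1 = \widetilde{\nabla}_Y \bar{N}_1 = 0$, so $\M_m$ again lies in a fixed hyperplane, and $\langle H, H\rangle = (1+\varkappa^2)/(4a^2) = \const$. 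In both branches $\M_m$ is a CMC-surface lying in a hyperplane, which is exactly the assertion.

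I expect no genuine obstacle here, since the substantive computations are entirely carried out in Theorem~\ref{Th:meridSurfParralelH} and the corollary is essentially a bookkeeping summary. The only point deserving explicit mention is the exhaustiveness of the case split — that the system \eqref{E:Eq-7} admits no solutions outside cases (i) and (ii) — but this follows immediately from $\varkappa\dot{f} = 0$, whose two factors exactly label the two branches. I would therefore present the proof as a brief appeal to the theorem together with this remark on exhaustiveness.
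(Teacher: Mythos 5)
Your proposal is correct and matches the paper's (implicit) argument: the corollary is stated immediately after Theorem \ref{Th:meridSurfParralelH} precisely because both cases (i) and (ii) of that theorem yield CMC-surfaces lying in a hyperplane, and the case split coming from $\varkappa\dot{f}=0$ is exhaustive. Nothing further is needed.
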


\section{Timelike meridian surfaces of elliptic type with parallel normalized mean curvature vector field}

In this section we will describe the class of timelike meridian surfaces of elliptic type  with parallel normalized mean curvature vector field, but non-parallel $H$. 

Let $\M_m$ be a timelike  meridian surface of elliptic type, defined by \eqref{Eq:meridian_surf}. We assume that $\langle H, H \rangle \neq 0$ and  $H$ is not parallel. The normalized mean curvature vector field is $H_0 =\frac{H}{\sqrt{\langle H, H \rangle}}$. Using  \eqref{E:Eq-mer-H} we get that
\begin{equation}  \label{E:Eq-H0}
H_0 = \frac{\varkappa \sqrt{\dot{f}^2 + 1}}{\sqrt{\varkappa^2 (\dot{f}^2 + 1)^2 + (f \ddot{f}+ \dot{f}^2 + 1)^2}} \,N_1 - \frac{f \ddot{f}+ \dot{f}^2 + 1}{\sqrt{\varkappa^2 (\dot{f}^2 + 1)^2 + (f \ddot{f}+ \dot{f}^2 + 1)^2}} \,N_2.
\end{equation}

The surface $\M_m$ has  parallel normalized mean curvature vector field if and only if 
 $D_X H_0 = D_Y H_0 = 0$.  
Using \eqref{E:Eq-mer-derivative-1} and \eqref{E:Eq-H0} we calculate $D_X H_0$ and $D_Y H_0$ and obtain the following formulas:
\begin{equation*}\label{Eq:NormalizedMeanCurvDeriv}
\begin{array}{l}
\vspace{.2cm}
D_X H_0 = \frac{\partial}{\partial u} \!\left(\!\! \frac{\varkappa \sqrt{\dot{f}^2 + 1}}{\sqrt{\varkappa^2 (\dot{f}^2 + 1)^2 \!+ \!(f \ddot{f}+ \dot{f}^2 + 1)^2}}  \!\! \right) \! N_1 - \frac{\partial}{\partial u} \!\left(\! \!\frac{f \ddot{f}+ \dot{f}^2 + 1}{\sqrt{\varkappa^2 (\dot{f}^2 + 1)^2 \!+ \!(f \ddot{f}+ \dot{f}^2 + 1)^2}} \!\!\right) \! N_2; \\
\vspace{.2cm} 
D_Y H_0 = \frac{1}{f} \frac{\partial}{\partial v} \!\left(\!\! \frac{\varkappa \sqrt{\dot{f}^2 + 1}}{\sqrt{\varkappa^2 (\dot{f}^2 + 1)^2 \!+ \!(f \ddot{f}+ \dot{f}^2 + 1)^2}}  \!\! \right) \! N_1 - \frac{1}{f}\frac{\partial}{\partial v} \!\left(\! \!\frac{f \ddot{f}+ \dot{f}^2 + 1}{\sqrt{\varkappa^2 (\dot{f}^2 + 1)^2 \!+ \!(f \ddot{f}+ \dot{f}^2 + 1)^2}}\! \!\right) \! N_2.
\end{array}
\end{equation*}

These formulas imply that  $\M_m$ has  parallel normalized mean curvature vector field if and only if 
\begin{equation}\label{Eq:NormalizedMeanCurv-cond}
\begin{array}{l}
\vspace{.2cm}
 \frac{\varkappa \sqrt{\dot{f}^2 + 1}}{\sqrt{\varkappa^2 (\dot{f}^2 + 1)^2 \!+ \!(f \ddot{f}+ \dot{f}^2 + 1)^2}}  =const = \alpha; \\
\vspace{.2cm} 
\frac{f \ddot{f}+ \dot{f}^2 + 1}{\sqrt{\varkappa^2 (\dot{f}^2 + 1)^2 \!+ \!(f \ddot{f}+ \dot{f}^2 + 1)^2}}  =const =\beta,
\end{array}
\end{equation}
for some constants $\alpha$ and $\beta$.

\begin{thm}\label{Th:meridSurfParallelNorm}
Let $\M_m$ be a timelike  meridian surface of elliptic type, defined by \eqref{Eq:meridian_surf}. Then,  $\M_m$ has parallel normalized mean curvature vector field (but non-parallel mean curvature vector), if and only if one of the following cases holds:

\hskip 10mm (i)  $\varkappa \neq 0$ and the  meridian $m$ is defined by
$$f(u) = \pm \sqrt{-u^2 +2au+b}, \quad g(u) = \pm \sqrt{a^2+b} \arcsin \frac{u-a}{\sqrt{a^2+b}} + c,$$ 
where $a = const$, $b=const$, $c = const$.

\hskip 10mm (ii)  the curve $\textbf{c}$   has non-zero constant spherical curvature  and the meridian $m$ is determined by $\dot{f} = \varphi(f)$ where
\begin{equation} \notag
\varphi(t) = \pm \frac{1}{t} \sqrt{(c t+a)^2 -t^2}, \quad a =
const, \; c = const \neq 0,\; c^2 \neq \varkappa^2,
\end{equation}
$g(u)$ is defined by $\dot{g} = \pm \sqrt{\dot{f}^2+1}$.
\end{thm}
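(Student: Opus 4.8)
The plan is to argue directly from the characterization \eqref{Eq:NormalizedMeanCurv-cond}. That characterization rests on the observation, visible in the derivative formulas \eqref{E:Eq-mer-derivative-1}, that $\widetilde{\nabla}_X N_1 = 0$ while the normal components of $\widetilde{\nabla}_Y N_1$, $\widetilde{\nabla}_X N_2$ and $\widetilde{\nabla}_Y N_2$ all vanish; hence $\{N_1,N_2\}$ is a parallel frame of the normal bundle and $D H_0 = 0$ is equivalent to the constancy of the two coefficients of $H_0$ in \eqref{E:Eq-H0}. Abbreviating $S = 1 + \dot{f}^2 + f\ddot{f}$ and $T = \dot{f}^2 + 1$, I would split the analysis according to whether the $N_2$-component of $H$ vanishes, i.e. whether $S \equiv 0$ or $S \not\equiv 0$.

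In the first case $S \equiv 0$, the vector $H = \frac{\varkappa}{2f}\,N_1$ is a multiple of the parallel field $N_1$, so $H_0 = \pm N_1$ is automatically parallel; this gives case (i). The equation $S \equiv 0$ is precisely $1 + \dot{f}^2 + f\ddot{f} = 0$, which was already integrated in Theorem \ref{Th:meridSurfMin}, so the meridian is $f(u) = \pm\sqrt{-u^2 + 2au + b}$ with the corresponding $g$. Here $\varkappa$ stays free subject only to $\varkappa \neq 0$ (so that $H \neq 0$), and $\|H\| = |\varkappa|/(2f)$ is non-constant because $f$ is, so $H$ is non-parallel as required.

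In the second case $S \not\equiv 0$ I would first discard $\varkappa \equiv 0$, which forces $N_1$ to be constant and places the surface in a hyperplane $\R^3_1$ (a degenerate codimension-one situation). With $\varkappa \not\equiv 0$ both constants in \eqref{Eq:NormalizedMeanCurv-cond} are nonzero, so dividing the two equations gives $\varkappa(v)\,\sqrt{T(u)}/S(u) = \const$. Since $\sqrt{T} \geq 1$ never vanishes, the left-hand side is a nonvanishing product of a function of $v$ and a function of $u$ equal to a constant, forcing $\varkappa = \const$ and $S/\sqrt{T} = \const =: c$ with $c \neq 0$; conversely these two conditions make the coefficients in \eqref{E:Eq-H0} equal to $\varkappa/\sqrt{\varkappa^2 + c^2}$ and $c/\sqrt{\varkappa^2 + c^2}$, hence constant. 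It then remains to solve $(1 + \dot{f}^2 + f\ddot{f})/\sqrt{\dot{f}^2 + 1} = c$; putting $\dot{f} = \varphi(f)$ and $z(t) = \sqrt{\varphi^2(t) + 1}$ turns this into the linear equation $(t\,z)' = c$, whence $z = c + a/t$ and $\varphi(t) = \pm\frac{1}{t}\sqrt{(ct + a)^2 - t^2}$, which is case (ii), with $g$ recovered from $\dot{g} = \pm\sqrt{\dot{f}^2 + 1}$.

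Finally I would verify the converse by substituting each family into \eqref{Eq:NormalizedMeanCurv-cond}, and record the non-degeneracy conditions. Non-parallelism of $H$ reduces in both cases to $\|H\|$ being non-constant, i.e. to $f$ being non-constant; the remaining constraints $\varkappa \neq 0$ (excluding the hyperplane surfaces), $c \neq 0$ (separating case (ii) from case (i)) and $c^2 \neq \varkappa^2$ are non-degeneracy conditions on the constants. I expect the crux to be the second case: one must argue carefully that constancy of the ratio forces $\varkappa$ \emph{itself} to be constant rather than merely constrained, and then recognize the substitution $z = \sqrt{\varphi^2 + 1}$ that linearizes the otherwise nonlinear condition on $f$ into $(t\,z)' = c$; the rest is routine integration.
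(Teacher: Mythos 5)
Your proposal follows essentially the same route as the paper's proof: the same reduction to the constancy conditions \eqref{Eq:NormalizedMeanCurv-cond} via the parallel normal frame $\{N_1,N_2\}$, the same case split on whether $1+\dot f^2+f\ddot f$ vanishes, the same separation-of-variables argument forcing $\varkappa=\const$ in the second case, and the same substitution $z=\sqrt{\varphi^2+1}$ linearizing the meridian ODE. The only difference is that you explicitly dismiss the subcase $\varkappa\equiv 0$, $1+\dot f^2+f\ddot f\not\equiv 0$ (where the surface lies in a hyperplane and $H_0=\mp N_2$ is trivially parallel), which the paper passes over silently by asserting $\alpha\neq 0$, $\beta\neq 0$; your treatment is, if anything, slightly more explicit there.
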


\begin{proof}
Let $\M_m$ be a timelike  meridian surface of elliptic type with parallel normalized mean curvature vector field, i.e. $D_X H_0 = D_Y H_0 =0$. 
Then,  equalities  \eqref{Eq:NormalizedMeanCurv-cond} hold true. We will consider the following cases.

\vskip 1mm
Case (i):  $f \ddot{f} + \dot{f}^2 + 1= 0$. In this case, from \eqref{E:Eq-H0} it follows that the normalized mean curvature vector field is $H_0 = n_1$ and the mean curvature vector field is 
$H = \frac{\varkappa}{2f} \,n_1$. Since we are interested in surfaces satisfying  $\langle H,H \rangle \neq 0$, we get that $\varkappa \neq 0$. The general solution of the differential equation 
$f \ddot{f} + \dot{f}^2 + 1= 0$ is 
$$f(u) = \pm \sqrt{-u^2 +2au+b},$$
 where $a=const$, $b =const$. Having in mind that  $\dot{g} = \pm \sqrt{\dot{f}^2+1}$, after integration we obtain the following expression for $g(u)$:
$$g(u) = \pm \sqrt{a^2+b} \arcsin \frac{u-a}{\sqrt{a^2+b}} + c, \quad c = const.$$

\vskip 1mm
Case (ii):  $f \ddot{f} + \dot{f}^2 + 1 \neq 0$ in a sub-interval $I_0 \subset I \subset \R$. In this case, equalities \eqref{Eq:NormalizedMeanCurv-cond} imply
\begin{equation} \label{E:Eq-13}
\frac{\beta}{\alpha} \,\varkappa =  \frac{f \ddot{f} + \dot{f}^2 + 1}{\sqrt{\dot{f}^2+1}},  \quad \alpha \neq 0, \; \beta \neq 0.
\end{equation}
The left-hand side of \eqref{E:Eq-13} is a function of the parameter $v$, while the right-hand side of \eqref{E:Eq-13} is a function of the parameter $u$. Hence, we obtain that 
\begin{equation} \notag
\begin{array}{l}
\vspace{2mm}
\frac{f \ddot{f} + \dot{f}^2 + 1}{\sqrt{\dot{f}^2+1}} = c, \quad c = const \neq 0;\\
\varkappa = \frac{\alpha}{\beta}\, c = const.
\end{array}
\end{equation}
Now, the length of the mean curvature vector field is $\langle H,H \rangle  = \frac{\varkappa^2 + c^2}{4f^2} \neq 0$. 
In this case, the meridian $m$ is determined by the following differential
equation:
\begin{equation} \label{E:Eq-14}
f \ddot{f} + \dot{f}^2 + 1 = c \sqrt{\dot{f}^2+1}.
\end{equation}
To find the solutions we set $\dot{f} = \varphi (f)$ in equation \eqref{E:Eq-14} and obtain
that the function $\varphi  = \varphi (t)$ satisfies
\begin{equation} \label{E:Eq-15}
\frac{t}{2} \,(\varphi ^2)' + \varphi ^2 + 1 = c \sqrt{\varphi ^2 + 1}.
\end{equation}
Setting $z(t) = \sqrt{\varphi ^2(t) +1}$, we obtain the equation 
\begin{equation} \notag
z' + \frac{1}{t}\, z = \frac{c}{t},
\end{equation}
whose solution  is given by the formula $z(t) = \frac{ct + a}{t}$, $a = const$.
Consequently, the general solution of \eqref{E:Eq-15} is given by the formula
\begin{equation} \notag
\varphi(t) = \pm \frac{1}{t} \sqrt{(c t+a)^2 -t^2}.
\end{equation}

Conversely, if one of the cases (i) or (ii)  in the theorem  holds true, then by direct computation one can obtain  that
$D_X H_0 = D_Y H_0 = 0$, which means that the surface has parallel normalized  mean curvature vector field.
Moreover, in case (i) we calculate that
$$D_XH = - \frac{\varkappa \dot{f}}{2f^2} \,N_1; \qquad D_YH = \frac{\varkappa'}{2f^2} \,N_1,$$
and in case (ii) we get
$$D_XH = -\frac{\varkappa \dot{f}}{2f^2} \,N_1 - \frac{c\dot{f}}{2f^2} \,N_2; \qquad D_YH = 0,$$
which imply that in both cases $H$ is not parallel in the normal bundle, since $\varkappa \neq 0$, $\dot{f} \neq 0$. 

\end{proof}

We will use the result in Theorem \ref{Th:meridSurfParallelNorm} to find explicit solutions to the system of PDEs \eqref{E:Eq-fund-1}.

\section{Examples}

In this section we will give examples of solutions to the background systems of natural partial differential equations describing
the timelike surfaces with parallel normalized mean curvature vector field in $\R^4_1$.

\vskip 1mm

To apply the theory of timelike surfaces  in the Minkowski 4-space $\R^4_1$ developed in \cite{B-M-2} and \cite{B-M} to the class of meridian surfaces of elliptic type, we need to find the geometric  isotropic frame field $\{x,y,n_1,n_2\}$ introduced in the Preliminaries and the isotropic parametrization. 

Let us consider a  timelike meridian surface $\M_m$  determined by  a spherical curve $\textbf{c}$ with curvature $\varkappa \neq 0$ and
  meridian curve $m: u \rightarrow (f(u), g(u))$.
To introduce isotropic parameters on the surface we consider the following change of the parametrization:
$$
\left |
\begin{array}{l} \ds
\vspace{.3cm}
\bar{u} = \frac{1}{\sqrt{2}} \int{\frac{1}{f(u)}} d u + \frac{v}{\sqrt{2}} \\
\ds \bar{v} = \frac{1}{\sqrt{2}} \int{\frac{1}{f(u)}} d u - \frac{v}{\sqrt{2}}
\end{array}
\right .
$$
Then, the derivatives $z_{\bar{u}}, z_{\bar{v}}$ with respect to the new parameters $\bar{u}, \bar{v}$ are:
$$
\begin{array}{l} 
\vspace{.2cm}
\ds z_{\bar{u}} = \frac{f}{\sqrt{2}} z_u + \frac{1}{\sqrt{2}} z_v; \\
\vspace{.2cm}
\ds z_{\bar{v}} = \frac{f}{\sqrt{2}} z_u - \frac{1}{\sqrt{2}} z_v,
\end{array}
$$
and hence, the coefficients of the first fundamental form in terms of the new parameters are:
$$
\begin{array}{l} 
\vspace{.2cm}
\bar{E} = \langle z_{\bar{u}} , z_{\bar{u}} \rangle = 0; \\
\vspace{.2cm}
\bar{F} = \langle z_{\bar{u}} , z_{\bar{v}} \rangle = -f^2(\bar{u}, \bar{v}); \\
\vspace{.2cm}
\bar{G} = \langle z_{\bar{v}} , z_{\bar{v}} \rangle = 0,
\end{array}
$$
which show that  $(\bar{u}, \bar{v})$ are isotropic parameters of the surface. We will introduce the geometric frame field in the sense of \cite{B-M-2} and will find the geometric functions of the meridian surface when $\M_m$ has  parallel normalized mean curvature vector field (but non-parallel mean curvature vector). 

The isotropic frame field of the tangent space is determined by: 
\begin{equation*} \label{E:Eq-mer-isotropic}
\begin{array}{l}
\vspace{2mm}
x = \ds \frac{z_{\bar{u}}}{f} = \frac{X+Y}{\sqrt{2}}; \\
\vspace{2mm}
y = \ds \frac{z_{\bar{v}}}{f} = \frac{X-Y}{\sqrt{2}}.
\end{array} 
\end{equation*}
Obviously, 
$\langle x, x \rangle = 0, \,\, \langle x, y \rangle = -1, \,\,\langle y, y \rangle = 0$.
We consider the normal vector fields $n_1$ and $n_2$, defined by:
$$
\begin{array}{l}
\vspace{2mm}
n_1 = \ds \frac{\varkappa \sqrt{\dot{f}^2 + 1}}{\sqrt{\varkappa^2(\dot{f}^2 + 1) + (1 + \dot{f}^2 + f \ddot{f})^2}} \,N_1 - \frac{1 + \dot{f}^2 + f \ddot{f}}{\sqrt{\varkappa^2(\dot{f}^2 + 1) + (1 + \dot{f}^2 + f \ddot{f})^2}} \,N_2, \\
\vspace{2mm}
n_2 = \ds \frac{1 + \dot{f}^2 + f \ddot{f}}{\sqrt{\varkappa^2(\dot{f}^2 + 1) + (1 + \dot{f}^2 + f \ddot{f})^2}} \,N_1 + \frac{\varkappa \sqrt{\dot{f}^2 + 1}}{\sqrt{\varkappa^2(\dot{f}^2 + 1) + (1 + \dot{f}^2 + f \ddot{f})^2}} \,N_2.
\end{array}
$$
Then, the frame field $\{x, y, n_1, n_2\}$ is the geometric frame field of  $\M_m$ in the sense of \cite{B-M-2}, since $x$ and $y$ are parallel to the lightlike directions in the tangent space of the surface and $n_1$ is parallel to the mean curvature vector field $H$.

Now, we will consider meridian surfaces with parallel normalized mean curvature vector field (but non-parallel mean curvature vector). According to Theorem \ref{Th:meridSurfParallelNorm} we have two cases: (i) and (ii).

In the first case, $m$ is  defined by: 
$$f(u) = \pm \sqrt{-u^2 +2au+b}, \qquad g(u) = \pm \sqrt{a^2+b} \arcsin \frac{u-a}{\sqrt{a^2+b}} + c,$$ 
 where $a = const$, $b=const$, $c = const$. In this case, by long but standard calculations using the geometric frame field, formulas \eqref{E:Eq-mer-derivative-1} and \eqref{E:DerivFormIsotr},  we obtain that the geometric functions of the surface are expressed as follows: 
\begin{equation*}\label{Eq:MeridianInvariants-0}
\begin{array}{l}
\vspace{.2cm}
\gamma_1 = - \gamma_2 =  \ds\frac{u-a}{\sqrt{2}(-u^2+2a u + b)};\\
\vspace{.2cm}
\nu = \lambda_1 = \lambda_2 = \ds \pm \frac{ \varkappa}{2\sqrt{-u^2+2a u + b}};\\
\vspace{.2cm}
\mu_1 = \mu_2 =  \ds\frac{-\sqrt{a^2+b}}{-u^2+2a u + b};\\
\vspace{.2cm}
\beta_1 = \beta_2 = 0.\\
\end{array}
\end{equation*}

In the second case, using that $f \ddot{f} + \dot{f}^2 + 1 = c \sqrt{\dot{f}^2+1}$, we obtain the following expressions for the geometric functions of the surface:
\begin{equation}\label{Eq:MeridianInvariants9f}
\begin{array}{l}
\vspace{.2cm}
\gamma_1 = - \gamma_2 =\ds  -\frac{\dot{f}}{\sqrt{2}f};\\
\vspace{.2cm}
\nu = \ds \frac{\sqrt{\varkappa^2 +c^2}}{2f};\\
\vspace{.2cm}
\lambda_1 = \lambda_2 =\ds  \frac{\varkappa^2 -c^2 + 2 c \sqrt{\dot{f}^2 + 1}}{2f \sqrt{\varkappa^2 +c^2}};\\
\vspace{.2cm}
\mu_1 = \mu_2 =\ds  \frac{\varkappa \left(c-\sqrt{\dot{f}^2 + 1}\right)}{f \sqrt{\varkappa^2 +c^2}};\\
\vspace{.2cm}
\beta_1 = \beta_2 = 0.
\end{array}
\end{equation}

Note that, in formulas \eqref{Eq:MeridianInvariants9f}, $\varkappa$ is a non-zero constant and $f$ is a function determined by  $\dot{f} = \varphi(f)$, where
$\varphi(t) = \pm \frac{1}{t} \sqrt{(c t+a)^2 -t^2}, \quad a = const, \; c = const \neq 0,\; c^2 \neq \varkappa^2$. 

\vskip 2mm
The geometric functions in the two cases considered above give solutions to the system of PDEs  \eqref{E:Eq-fund-1}. Indeed, let $\varkappa (v)$ be an arbitrary non-zero function and consider the functions 
\begin{equation}  \label{E:Eq-nn}
\begin{array}{l}
\vspace{2mm}
\lambda (u,v) = \ds \frac{ \varkappa}{2\sqrt{-u^2+2a u + b}}; \\
\vspace{2mm}
\mu (u,v) = \ds\frac{-\sqrt{a^2+b}}{-u^2+2a u + b}; \\
\vspace{2mm}
\nu (u,v) = \ds  \frac{ \varkappa}{2\sqrt{-u^2+2a u + b}}. 
\end{array}
\end{equation}
Changing the parameters $(u,v)$ with
\begin{equation*} 
\begin{array}{l}
\vspace{2mm}
\bar{u} = \frac{1}{\sqrt{2}} \arcsin{\frac{u-a}{\sqrt{a^2 +b}}} + \frac{v}{\sqrt{2}};\\ 
\vspace{2mm}
\bar{v} = \frac{1}{\sqrt{2}} \arcsin{\frac{u-a}{\sqrt{a^2 +b}}} - \frac{v}{\sqrt{2}},\\
\end{array}
\end{equation*}
we obtain functions  $\lambda(u(\bar{u},\bar{v}), v(\bar{u},\bar{v}))$, $\mu(u(\bar{u},\bar{v}), v(\bar{u},\bar{v}))$,  $\nu(u(\bar{u},\bar{v}), v(\bar{u},\bar{v}))$,  defined by \eqref{E:Eq-nn}, 
which give a solution to the  background system of partial differential equations
\begin{equation} \label{E:Eq-syst1}
\begin{array}{l}
\vspace{2mm}
\nu_{\bar{u}} + \lambda_{\bar{v}} = \lambda (\ln|\mu|)_{\bar{v}};\\
\vspace{2mm}
\lambda_{\bar{u}} + \nu_{\bar{v}} = \lambda (\ln|\mu|)_{\bar{u}};\\
\vspace{2mm}
|\mu| \left(\ln |\mu|\right)_{{\bar{u}}{\bar{v}}} = \lambda^2 + \mu^2 - \nu^2.
\end{array} 
\end{equation}
This is system  \eqref{E:Eq-fund-1} in the case $\varepsilon = -1$, since it can easily be checked that in case (i) the meridian surface  $\M_m$ satisfies $K - H^2 <0$.

\vskip 1mm
Although we obtained this solution to \eqref{E:Eq-syst1} in a geometric way (using geometric construction), one can check also by direct computations that the functions  $\lambda(u(\bar{u},\bar{v}), v(\bar{u},\bar{v}))$, $\mu(u(\bar{u},\bar{v}), v(\bar{u},\bar{v}))$,  $\nu(u(\bar{u},\bar{v}), v(\bar{u},\bar{v}))$,  defined by \eqref{E:Eq-nn} satisfy the equations in system \eqref{E:Eq-syst1}.

\vskip 1mm

So, studying the class of timelike meridian surfaces of elliptic type, we find a whole family of solutions to the background system of PDEs describing the timelike surfaces with parallel normalized mean curvature vector field in $\R^4_1$. Given an arbitrary non-zero function  $\varkappa(v)$ and arbitrary constants $a$, $b$, $c$, by \eqref{E:Eq-nn} we obtain a solution to system \eqref{E:Eq-syst1}.

\vskip 3mm
\noindent
\textit{Example 1.} 
Let $a=1$, $b=3$ and consider the functions 
\begin{equation} \label{E:Eq-ex1}
\begin{array}{l}
\vspace{2mm}
\lambda (u,v) = \ds \frac{\varkappa (v)}{2\sqrt{-u^2+ 2 u + 3}}; \\
\vspace{2mm}
\mu (u,v) = \ds\frac{2}{u^2 -2 u - 3}; \\
\vspace{2mm}
\nu (u,v) = \ds  \frac{\varkappa(v)}{2\sqrt{-u^2+ 2 u + 3}}, 
\end{array}
\end{equation}
where $u\in(-1; 3)$ and $\varkappa (v) \neq 0$ is an arbitrary function. By direct computations it can be checked that changing  the parameters  $(u,v)$ with
\begin{equation*} 
\begin{array}{l}
\vspace{2mm}
\bar{u} = \frac{1}{\sqrt{2}} \arcsin{\frac{u-1}{2}} + \frac{v}{\sqrt{2}};\\ 
\vspace{2mm}
\bar{v} = \frac{1}{\sqrt{2}} \arcsin{\frac{u-1}{2}} - \frac{v}{\sqrt{2}},\\
\end{array}
\end{equation*}
functions \eqref{E:Eq-ex1} satisfy system \eqref{E:Eq-syst1}.

\vskip 3mm
\noindent
\textit{Example 2.} 
 Let $a=5$, $b=0$. Now, we consider the functions 
\begin{equation} \label{E:Eq-ex2}
\begin{array}{l}
\vspace{2mm}
\lambda (u,v) = \ds \frac{\varkappa (v)}{2\sqrt{u(10 -u)}}; \\
\vspace{2mm}
\mu (u,v) = \ds\frac{5}{u(u -10)}; \\
\vspace{2mm}
\nu (u,v) = \ds  \frac{\varkappa(v)}{2\sqrt{u(10 -u)}}, 
\end{array}
\end{equation}
where $u\in(0; 10)$ and $\varkappa (v) \neq 0$ is an arbitrary function. Changing  the parameters  $(u,v)$ with
\begin{equation*} 
\begin{array}{l}
\vspace{2mm}
\bar{u} = \frac{1}{\sqrt{2}} \arcsin{\frac{u-5}{5}} + \frac{v}{\sqrt{2}};\\ 
\vspace{2mm}
\bar{v} = \frac{1}{\sqrt{2}} \arcsin{\frac{u-5}{5}} - \frac{v}{\sqrt{2}},\\
\end{array}
\end{equation*}
we get that the functions  $\lambda(u(\bar{u},\bar{v}), v(\bar{u},\bar{v}))$, $\mu(u(\bar{u},\bar{v}), v(\bar{u},\bar{v}))$,  $\nu(u(\bar{u},\bar{v}), v(\bar{u},\bar{v}))$,  defined by \eqref{E:Eq-ex2} give a solution to system \eqref{E:Eq-syst1}.

\vskip 2mm
Solutions to the system of PDEs describing the timelike surfaces with parallel normalized mean curvature vector field in $\R^4_1$ can be found also in the class of timelike meridian surfaces of  hyperbolic or parabolic type. They are constructed as one-parameter systems of meridians of a rotational hypersurface with spacelike or lightlike axis. These classes of timelike surfaces will be studied in a separate paper.

\vskip 6mm 
\textbf{Acknowledgments:}
The first author is supported by the National Scientific Programme ''Young Scientists and Post-Doctoral Students -- 2''. The second author is partially supported by the National Science Fund, Ministry of Education and Science of Bulgaria under contract KP-06-N82/6. 

\vskip 6mm

\end{document}